\newcommand{\new}{\newcommand*}\new{\rnew}{\renewcommand*}
\new{\newe}{\newenvironment*}\new{\stl}{\setlength}
\stl{\textwidth}{155mm}\stl{\textheight}{22cm}\stl{\headheight}{0cm}
\stl{\topmargin}{0cm}\stl{\oddsidemargin}{0.5cm}\stl{\evensidemargin}{0cm}
\rnew{\arraystretch}{1.2}\rnew{\baselinestretch}{1.2}
\renewcommand{\thefootnote}{\ding{73}}
\newtheorem{thm}{Theorem}[section]
\newtheorem{lem}{Lemma}[section]
\newcommand{\dps}{\displaystyle}
\newcommand{\fr}{\frac}
\newcommand{\pa}{\partial}
\numberwithin{equation}{section}
\new{\sect}[1]{\section{#1}\setcounter{equation}{0}
 \setcounter{thm}{0}\setcounter{lmm}{0}\setcounter{rmk}{0} }
\begin{document}
\title{On the degenerate Cauchy problem for a nonlinear variational wave system, Part I: The same wave speed case
}

\author{
Yanbo Hu$^*$, Huijuan Song
\\{\small \it Department of Mathematics, Hangzhou Normal University,
Hangzhou, 311121, PR China}}

\rnew{\thefootnote}{\fnsymbol{footnote}}

\footnotetext{ $^*$Corresponding author. }

\footnotetext{ Email address: yanbo.hu@hotmail.com (Y. Hu), huijuan\_mail@163.com (H. Song). }

\date{}

\maketitle

\begin{abstract}

We investigate a one-dimensional nonlinear wave system which arises from a variational principle modeling a type of cholesteric liquid crystals. The problem treated here is the Cauchy problem for the same wave speed case
with initial data on the parabolic degenerating line. By introducing a partial hodograph transformation,
we establish the local existence of smooth solutions in a weighted metric space based on the iteration method. A classical solution of the primary problem is constructed by converting the solution in the partial hodograph variables to that in the original variables.
\end{abstract}

\begin{keywords}
Variational wave system, degenerate hyperbolic, Cauchy problem, classical solution,
weighted metric space
\end{keywords}

\begin{AMS}
35L20, 35L70, 35L80
\end{AMS}

\section{Introduction}\label{1.1}

We are interested in the degenerate Cauchy problem for the one-dimensional nonlinear system of variational wave equations
\begin{align}\label{a1}
\left\{
\begin{array}{l}
u_{tt}-(c_1^2(u)u_x)_x= -c_1(u)c_1'(u)u^2_x+a(u)a'(u)[v^2_t-c_{2}^2(u)v^2_x]\\
\qquad \qquad\qquad\qquad\quad -a^2(u)c_2(u)c_2'(u)v^2_x+2\lambda a(u)a'(u)v_x,\\
  (a^2(u)v_t)_t-[a^2(u)c_{2}^2(u)v_x-\lambda a^2(u)]_x=0,
\end{array}
\right.
\end{align}
where $t$-$x$ are the time-space independent variables, $(u,v)$ are the dependent variables, $c_1$, $c_2$ and $a$ are smooth functions of $u$, the prime means the derivative with respect to $u$, and $\lambda$ is a constant.

System \eqref{a1} is derived from the theory of chiral nematic liquid crystals or cholesteric liquid crystals. In cholesteric liquid crystals, the average orientation of the long molecules can be described by a director field $\textbf{n}\in\mathbb{S}^2$. Associated with the director field $\textbf{n}$, the well-known Frank-Oseen potential energy density $W$ is expressed as the sum of the elastic and the chiral contribution (neglecting a constant
factor)
\begin{align}\label{a2}
W(\textbf{n},\nabla \textbf{n})=\bigg\{\fr{1}{2}k_1(\nabla\cdot\textbf{n})^2 +\fr{1}{2}k_2(\textbf{n}\cdot\nabla\times\textbf{n})^2 +\fr{1}{2}k_3|\textbf{n}\times(\nabla\times\textbf{n})|^2 \bigg\} +\lambda\textbf{n}\cdot\nabla\times\textbf{n},
\end{align}
where $k_1, k_2$ and $k_3$ are the splay, twist and bend elastic
constants of the material, respectively, see e.g. \cite{Frank, Leslie, Stewart}. The constant $\lambda$ is a material parameter representing molecular chirality given by $\lambda=\pm2\pi k_2/p_0$, where $p_0$ is the pitch of the cholesteric helix and the
sign depends on the handedness of the cholesteric liquid crystal. Note that a classical nematic liquid crystal
is actually a cholesteric with infinite pitch. For detailed information regarding cholesteric
liquid crystals, see, for example, \cite{Collings, Gennes, Oswald, Stephen}. In the regime in which inertia effects dominate viscosity, the propagation of the orientation waves in the director field then can be modeled by the least action principle \cite{Saxton1989, Ali0}
\begin{align}\label{a3}
\delta\int \bigg(\fr{1}{2}\pa_t\textbf{n}\cdot\pa_t\textbf{n}-W_n(\textbf{n},\nabla \textbf{n})\bigg){\rm d}\textbf{x}{\rm d}t=0,\quad \textbf{n}\cdot\textbf{n}=1.
\end{align}
For planar deformations depending on a single space variable $x$, that is, the director field has the special form $\textbf{n}=(\cos u(x,t), \sin u(x,t), 0)$ where $u$ measures the angle of the director field to
the $x$-direction, the Euler-Lagrange equation of the variational principle \eqref{a3} reads that
\begin{align}\label{1.6}
u_{tt}-c_1(u)(c_1(u)u_x)_x=0,
\end{align}
with $c_{1}^2(u)=k_1\sin^2u+k_3\cos^2u$, which is exactly identical to the equation corresponding
to the nematic case and has been widely studied since its introduction by Hunter and Saxton \cite{Hunter-Saxton}.
To clarify the effect of the chiral contribution in \eqref{a2}, it should be considered the three-dimensional deformations which include the twist deformations. Taking the director field $\textbf{n}$ as the following form
$$
\textbf{n}=(\cos u, \sin u\cos v, \sin u\sin v),
$$
where $u$ and $v$ are spherical polar angles and are functions of $(x,t)$, the Lagrangian density of \eqref{a3} is
\begin{align}\label{a4}
\fr{1}{2}\pa_t\textbf{n}\cdot\pa_t\textbf{n}-W(\textbf{n},\nabla \textbf{n})=\fr{1}{2}[u_{t}^2-c_{1}^2(u)u_{x}^2]+ \fr{1}{2}a^2(u)[v_{t}^2-c_{2}^2(u)v_{x}^2] +\lambda a^2(u)v_x,
\end{align}
where
$$
c_{1}^2(u)=k_1\sin^2u+k_3\cos^2u,\quad c_{2}^2(u)=k_2\sin^2u+k_3\cos^2u, \quad a^2(u)=\sin^2u.
$$
The system of Euler-Lagrange equations gives \eqref{a1}, see Hu \cite{Hu2017} for more details on the derivation. When $\lambda=0$, i.e., the pitch of the cholesteric $p_0=\infty$, system \eqref{a1} reduces to
\begin{align}\label{a5}
\left\{
  \begin{array}{lll}
    u_{tt}-(c_{1}^2(u)u_x)_x =-c_1(u)\pa_uc_{1}(u)u_{x}^2 +a(u)\pa_ua(u)[v_{t}^2-c_{2}^2(u)v_{x}^2] \\ \qquad \qquad \qquad \qquad \quad  -a^2(u)c_{2}(u,x)\pa_uc_{2}(u)v_{x}^2, \\
    (a^2(u)v_t)_t-[a^2(u)c_{2}^2(u)v_x]_x=0,
  \end{array}
\right.
\end{align}
which was first derived by Ali and Hunter \cite{Ali1} from the theory of nematic liquid crystals. We point out that the presence of parameter $\lambda$ in \eqref{a1} has a great influence on the results, see \cite{Hu2017} and this paper below.

Many efforts have been made to study the Cauchy problem for the variational wave equations \eqref{1.6} and \eqref{a5} under the assumptions that the wave speeds $c_1(\cdot)$ and $c_2(\cdot)$ are both positive functions. For the variational wave equation \eqref{1.6}, the formation of cusp-type singularities was shown in \cite{Glassey}, the existence of dissipative weak solutions was investigated in \cite{Bres-Huang, Zha-Zhe2001, Zha-Zhe2003, Zha-Zhe2005}, the existence and uniqueness of conservative weak solutions was provided in \cite{B, B-C-Z, Bres-Zheng, Holden}, the stability and the generic regularity of conservative weak solutions were presented in \cite{B-C1, B-C2}. A more general variational wave equation than \eqref{1.6} was explored in \cite{Hu2015}.
For the system of variational wave equations \eqref{a5}, Zhang and Zheng established the global existence of conservative weak solutions in \cite{Zha-Zhe2010} for the case $c_1=c_2$ and in \cite{Zha-Zhe2012} for the case $c_1<c_2$. An assumption on the function $a(\cdot)$ was got rid of in \cite{Chen-Zha-Zhe} by considering the director field $\textbf{n}$ in its natural three-component form. The readers are referred to \cite{Cai, Hu2012, Hu2017} for more discussions on the systems of variational wave equations.

In general, the elastic constants are positive and then the wave speeds $c_1(\cdot)$ and $c_2(\cdot)$ are strictly positive functions. However, in some cases, see e.g. \cite{Adlem, Dozov, Panov}, the elastic constants may be negative which implies that the wave speeds $c_1(\cdot)$ and $c_2(\cdot)$ can be zero. In \cite{Saxton1992}, Saxton examined the blow-up properties of smooth solutions to the degenerate hyperbolic equation \eqref{1.6} by setting one elastic constant to zero.
For another important application, if $c_1(u)=u$, then \eqref{1.6} corresponds to the second sound equation
\begin{align}\label{a6}
u_{tt}-u(uu_x)_x=0,
\end{align}
introduced by Kato and Sugiyama \cite{Kato}. The local existence of the Cauchy problem for \eqref{a6} was established in \cite{Kato} under the assumption $u(0,x)\geq A>0$. In \cite{Hu-Wang2}, Hu and Wang studied
the local existence of classical solutions to the Cauchy problem of \eqref{1.6} with initial
data given on the parabolic degenerating line. They \cite{Hu-Wang1} also discussed the
global existence of smooth solutions to a degenerate initial-boundary value problem under relaxed data.
The studies on the degenerate hyperbolic problems for the nonlinear variational wave equations
are still very limited so far. We also refer the reader to Refs. \cite{Hu-Li, Zhang-Zheng1, Zhang-Zheng2} for the works of the sonic-supersonic structures to the compressible Euler equations in gas dynamics.

We are concerned with the local existence of classical solutions to the Cauchy problem for the nonlinear variational wave system \eqref{a1} with degenerate initial data. The results are divided into two parts.
The current paper is the first part, dealing with system \eqref{a1} (and system \eqref{a5}) with $c_1=c_2$.
The case $c_1\neq c_2$ will be handled in the next paper \cite{Hu-Song}. Comparing to the strictly hyperbolic case, the main difficulty here is to treat the singularity caused by the hyperbolic degeneracy. To overcome this
difficulty, a partial hodograph transformation is introduced to transform the equations into a new system with a clear singularity-regularity structure. With a choice of weighted metric space, the local existence of classical solutions for the new system is established by employing the fixed-point method. It will be seen that the parameter $\lambda$ greatly affects the convergence of iterative sequence generated by the integral system. Moreover, we point out that the existence of framework presented here is difficult in taking the case $c_1\neq c_2$ due to the coupling of different characteristic fields, see the next paper \cite{Hu-Song} for details.

The rest of the paper is organized as follows. In Section 2, we formulate the degenerate Cauchy problem and then state the main results of the paper. Section 3 is devoted to transforming the problem into a new problem under a partial hodograph plane. Finally, we solve the new problem in a weighted metric space and then complete the proof of the main results.

\section{The problem and the main results}

Set $c_1=c_2=c$. Then system \eqref{a1} reduces to
\begin{align}\label{1.1}
\left\{
\begin{array}{l}
u_{tt}-(c^2(u)u_x)_x= -c(u)c'(u)u^2_x+a(u)a'(u)[v^2_t-c^2(u)v^2_x]\\
\qquad \qquad\qquad\qquad\quad -a^2(u)c(u)c'(u)v^2_x+2\lambda a(u)a'(u)v_x,\\
  (a^2(u)v_t)_t-[a^2(u)c^2(u)v_x-\lambda a^2(u)]_x=0.
\end{array}
\right.
\end{align}
We assume that the functions $c(\cdot)$ and $a(\cdot)$ satisfy
\begin{align}\label{1.8}
|c^{(l)}(z)|; |a^{(l)}(z)|<\infty\ (l=1,2,3),\quad |c'(z)|;|a(z)|\geq m_0>0, \quad \forall\ z\in R,
\end{align}
for some positive constant $m_0$, and then consider the Cauchy problem to \eqref{1.1} with the following initial data
\begin{align}\label{1.10}
\left\{
\begin{array}{l}
u(0,x)=\varphi_1, \quad\ \  u_{t}(0,x)=\psi_1(x), \\
v(0,x)=\varphi_2(x),\ v_{t}(0,x)=\psi_2(x),  \\
c(\varphi_1)=0.
\end{array}
\right.
\begin{array}{l}
\ \ \forall\ x\in R,
\end{array}
\end{align}
where $\varphi_1$ is a constant, $\varphi_2$, $\psi_1$ and $\psi_2$ are smooth functions. It is noted that
the wave speed  $c$ is zero on the initial line, which means the hyperbolic system \eqref{1.1} is parabolic degenerate at $t=0$.

Denote
\begin{align}\label{1.11}
\left\{
\begin{array}{l}
R_1=u_t+c(u)u_x,\\
S_1=u_t-c(u)u_x,
\end{array}
\right.\quad
\left\{
\begin{array}{l}
R_2=v_t+c(u)v_x,\\
S_2=v_t-c(u)v_x,
\end{array}
\right.
\end{align}
so that
\begin{align}\label{1.12a}
u_t=\fr{R_1+S_1}{2},\quad u_x=\fr{R_1-S_1}{2c},\quad v_t=\fr{R_2+S_2}{2},\quad v_x=\fr{R_2-S_2}{2c}.
\end{align}
Then, by \eqref{1.1}, we can obtain a first-order system in terms of $(R_1, S_1, R_2, S_2, u, v)$
\begin{align}\label{1.12}
\left\{
\begin{array}{l}
\dps R_{1t}-cR_{1x}=\fr{c'(R_1+S_1)}{4}\fr{R_1-S_1}{c}+\lambda aa'\fr{R_2-S_2}{c}-\fr{a^2c'}{4}\fr{(R_2-S_2)^2}{c}+aa'R_2S_2,\\[6pt]
\dps S_{1t}+cS_{1x}=\fr{c'(R_1+S_1)}{4}\fr{S_1-R_1}{c}+\lambda aa'\fr{R_2-S_2}{c}-\fr{a^2c'}{4}\fr{(R_2-S_2)^2}{c}+aa'R_2S_2,\\[6pt]
\dps R_{2t}-cR_{2x}=\fr{c'R_1}{2}\fr{R_2-S_2}{c}-\fr{\lambda a'}{a}\fr{R_1-S_1}{c}-\fr{a'}{a}(R_1S_2+S_1R_2),\\[6pt]
\dps S_{2t}+cS_{2x}=\fr{c'S_1}{2}\fr{S_2-R_2}{c}-\fr{\lambda a'}{a}\fr{R_1-S_1}{c}-\fr{a'}{a}(R_1S_2+S_1R_2),\\[6pt]
\dps u_t=\fr{R_1+S_1}{2},\quad v_t=\fr{R_2+S_2}{2}.
\end{array}
\right.
\end{align}
We look for classical solutions to system \eqref{1.12} with the following initial conditions
\begin{align}\label{1.13}
\begin{array}{l}
\dps R_1(0,x)=\psi_1(x),\quad S_1(0,x)=\psi_1(x),\quad R_2(0,x)=\psi_2(x),\quad S_2(0,x)=\psi_2(x),\\
\dps R_{1t}(0,x)=S_{1t}(0,x)=a(\varphi_1)a'(\varphi_1)\psi^2_{2}(x)+2\lambda a(\varphi_1)a'(\varphi_1)\varphi'_2(x):=f_{11}(x),\\[3pt]
\dps R_{2t}(0,x)=c'(\varphi_1)\psi_1(x)\varphi'_2(x) -\fr{2a'(\varphi_1)}{a(\varphi_1)}\psi_1(x)\psi_2(x):=f_{21}(x),\\[8pt]
\dps S_{2t}(0,x)=-c'(\varphi_1)\psi_1(x)\varphi'_2(x) -\fr{2a'(\varphi_1)}{a(\varphi_1)}\psi_1(x)\psi_2(x):=f_{22}(x),\\[3pt]
\dps u(0,x)=\varphi_1,\quad u_t(0,x)=\psi_1(x),\quad v(0,x)=\varphi_2(x),\quad v_t(0,x)=\psi_2(x).
\end{array}
\end{align}

It is worthwhile mentioning that the local existence of the degenerate Cauchy problem \eqref{1.12} \eqref{1.13} cannot be solved by the classical local existence theory of nonlinear hyperbolic equations in \cite{LiT, Wang}. The reason is that system \eqref{1.12} is not a continuously differentiable system by the degeneracy. We isolate the singularities of the system in a partial hodograph plane and then establish the existence of solutions in a weighted metric space. Finally, by expressing in terms of $(x,t)$ plane, we obtain the classical solutions of problem \eqref{1.12} \eqref{1.13} and so of problem \eqref{1.1} \eqref{1.10}. The main conclusions of this paper can be stated as follows.
\begin{thm}\label{thm1}
Suppose that \eqref{1.8} holds and functions $\varphi_2$, $\psi_1$ and $\psi_2$ satisfy
\begin{align}\label{1.14}
\begin{array}{c}
|\varphi^{(j)}_2(x)|<\infty\ (j=1,\cdots,4),\quad |\psi^{(k)}_1(x)|;|\psi^{(k)}_2(x)|<\infty \ (k=1,2,3), \\ |\psi_1(x)|\geq\psi_0>0,
\end{array}
\end{align}
for all $x\in R$ and some constant $\psi_0$. Then there exist constants $\lambda_0>0$ and $\delta>0$ such that the degenerate Cauchy problem \eqref{1.12} \eqref{1.13} with $|\lambda|\leq\lambda_0$ has a classical solution on $[0,\delta]\times R$.
\end{thm}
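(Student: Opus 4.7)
The plan is to follow the three-stage strategy sketched in the introduction: (i) perform a partial hodograph transformation that converts \eqref{1.12}\eqref{1.13} into a system with a clean singularity-regularity structure; (ii) solve the transformed problem by a Picard-type iteration in a suitable weighted metric space; (iii) push the solution back to the original $(t,x)$ variables by inverting the hodograph map. The hypotheses in \eqref{1.14}, in particular $|\psi_1|\geq\psi_0>0$, are there precisely to guarantee invertibility in step (iii).

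For step (i), since $c(\varphi_1)=0$ but $u_t(0,x)=\psi_1(x)$ is bounded away from zero, the natural new time-like variable is one that measures how far $u$ has departed from $\varphi_1$; equivalently, one can introduce characteristic coordinates $\alpha,\beta$ defined by $\alpha_t+c(u)\alpha_x=0$ and $\beta_t-c(u)\beta_x=0$ with $\alpha=\beta=x$ on $t=0$. In these coordinates the wave speed $c(u)$ becomes, up to a smooth nonvanishing factor, literally proportional to $\alpha-\beta$, so the singular factors $1/c$ appearing on the right-hand sides of \eqref{1.12} are turned into explicit factors $1/(\alpha-\beta)$ times smooth quantities. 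Writing the system as directional-derivative relations along the two characteristic fields, and appending transport equations for $t_\alpha,t_\beta,x_\alpha,x_\beta$ (to close it and to carry the inversion data for step (iii)), yields the new first-order system that will be solved.

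For step (ii), I set up an iteration on a strip $\{0\leq\alpha-\beta\leq\delta\}$ by freezing the unknown on the right-hand sides at each stage, integrating the resulting linear transport equations along the characteristics (which are now just coordinate lines), and feeding the output back. The norm is chosen so that quantities like $R_1-S_1$, $R_2-S_2$ and higher-order characteristic derivatives are measured only after multiplication by appropriate powers of $\alpha-\beta$ (or equivalently of $c$); these weights are dictated by the form of the source terms in \eqref{1.12} and by the initial compatibility data $f_{11},f_{21},f_{22}$ of \eqref{1.13}. Showing that the iteration map sends a closed ball in this weighted space into itself, and is a strict contraction there, yields the fixed point. The smallness of $\delta$ absorbs the terms that vanish on $t=0$, while the smallness of $|\lambda|\leq\lambda_0$ is needed precisely to control the terms $\lambda a a'(R_2-S_2)/c$ in the $R_1,S_1$ equations and $\lambda(a'/a)(R_1-S_1)/c$ in the $R_2,S_2$ equations, whose leading part does not vanish as $t\to 0^+$ and thus cannot be handled by $\delta$-smallness alone. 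This is the mechanism behind the introduction's remark that $\lambda$ greatly affects convergence.

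For step (iii), the transport equations for $t_\alpha,t_\beta,x_\alpha,x_\beta$ yield a Jacobian $J=t_\alpha x_\beta-t_\beta x_\alpha$ which, by construction, is proportional to $c(u)$ and hence vanishes on $\{t=0\}$; however, once divided by the explicit weight, the quotient stays strictly positive and smooth on the closed strip. The implicit function theorem then inverts the map $(\alpha,\beta)\mapsto(t,x)$ on $[0,\delta]\times R$ and produces $(R_i,S_i,u,v)$ of class $C^1$ satisfying \eqref{1.12}\eqref{1.13}, from which the desired classical solution of \eqref{1.1}\eqref{1.10} follows via \eqref{1.11}\eqref{1.12a}. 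The main obstacle I anticipate is in step (ii): choosing the weights so that every singular term on the right-hand side of \eqref{1.12} is dominated by the corresponding weighted norm of the iterate, simultaneously for the contraction estimate and for the invariance of the ball, and doing so uniformly in $\lambda$ up to $\lambda_0$. A secondary delicate point is the regularity transfer in step (iii): since $J$ degenerates on $t=0$, one must track enough derivatives of the hodograph unknowns — essentially up to the orders dictated by the assumptions $\varphi_2\in C^4$, $\psi_i\in C^3$ in \eqref{1.14} — to ensure the pulled-back solution is genuinely classical down to the degeneracy line.
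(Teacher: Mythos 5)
Your proposal uses characteristic coordinates $(\alpha,\beta)$, whereas the paper's ``partial hodograph transformation'' is something genuinely different: it keeps $y=x$ and replaces time by $\tau=-c(u(t,x))$. These are not equivalent, and the difference matters.

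The crux of the difficulty is in your step (i). You assert that in characteristic coordinates ``$c(u)$ becomes, up to a smooth nonvanishing factor, literally proportional to $\alpha-\beta$.'' That scaling is wrong. Since $u=\varphi_1+\psi_1 t+O(t^2)$ and $c(\varphi_1)=0$, one has $c(u)\sim c'(\varphi_1)\psi_1\,t$, i.e.\ $|c|\sim t$. The two characteristics through a point $(x,t)$ with $t$ small have $dx/dt=\pm c\sim \pm t$, so they are parabolas and the separation of their feet on $t=0$ satisfies $\alpha-\beta\sim t^2\sim c^2$. Hence $c\sim\sqrt{\alpha-\beta}$, not $\alpha-\beta$: the singular factor $1/c$ becomes a square-root singularity $1/\sqrt{\alpha-\beta}$. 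This changes the whole weighting scheme you propose (powers of $\alpha-\beta$ are not ``equivalent'' to powers of $c$), makes the Jacobian $t_\alpha x_\beta - t_\beta x_\alpha$ degenerate like $\sqrt{\alpha-\beta}$ on the initial line rather than like $\alpha-\beta$, and pushes the problem into the regime of the sonic-supersonic Euler results the paper cites, where such $\sqrt{\cdot}$-type singularities require a substantially more delicate analysis. Your plan as written would not close because the contraction and regularity estimates are calibrated to the wrong singularity exponent.

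The paper avoids this entirely by a smarter choice of new time variable. With $\tau=-c(u)$, $y=x$, the Jacobian of $(x,t)\mapsto(y,\tau)$ is $J=-c'(R_1+S_1)/2\geq m_0\psi_0/2>0$, so the map is uniformly non-degenerate up to $t=0$; the inversion in step (iii) is then immediate and requires no division by a vanishing weight. Moreover the singular coefficients in the transformed system \eqref{2.4} are clean simple poles $1/\tau$, which, after the homogenization \eqref{2.8} (subtracting off the first-order Taylor data so that $U_i,U_{i\tau}$ vanish at $\tau=0$), are dominated by the weighted norms $\|U_j/\tau^2\|$, $\|U_{jy}/\tau^2\|$, $\|U_{jyy}/\tau^2\|$ of \eqref{2.12}. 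Your observation about the role of $\lambda_0$ (the terms $\lambda aa'(R_2-S_2)/c$ etc.\ do not gain a factor of $\delta$ and must be absorbed by $\lambda$-smallness) is correct and matches the paper's estimates \eqref{3.12}, \eqref{3.22}, \eqref{3.29}, but it rides on top of the incorrectly identified singularity, so the proposal as it stands has a genuine gap at the level of the coordinate change.
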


From Theorem \ref{thm1}, we directly have
\begin{thm}\label{thm2}
Let the assumptions in Theorem \ref{thm1} hold. Then there exist constants $\lambda_0>0$ and $\delta>0$ such that the degenerate Cauchy problem \eqref{1.1} \eqref{1.10} with $|\lambda|\leq\lambda_0$ has a classical solution on $[0,\delta]\times R$.
\end{thm}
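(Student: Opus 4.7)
The plan is to derive Theorem \ref{thm2} from Theorem \ref{thm1} by reversing the algebraic passage from \eqref{1.1}\eqref{1.10} to \eqref{1.12}\eqref{1.13}. First, I apply Theorem \ref{thm1} to obtain on $[0,\delta]\times\mathbb{R}$ a classical solution $(R_1,S_1,R_2,S_2,u,v)$ of \eqref{1.12}\eqref{1.13}. The last two equations of \eqref{1.12} give $u_t=(R_1+S_1)/2$ and $v_t=(R_2+S_2)/2$, so the initial data \eqref{1.10} follow directly from the last line of \eqref{1.13}. The substantive task is to check that $(u,v)$ actually solves \eqref{1.1}, which reduces to verifying the compatibility identities
\begin{equation*}
R_1-S_1=2c(u)u_x,\qquad R_2-S_2=2c(u)v_x
\end{equation*}
encoded by \eqref{1.11}; once these hold, the algebraic reversal of \eqref{1.12} to \eqref{1.1} is routine, using the identity $(c^2u_x)_x=\tfrac12 c(u)(R_{1x}-S_{1x})+c(u)c'(u)u_x^2$ and the observation $R_2S_2=v_t^2-c^2(u)v_x^2$.

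To obtain the compatibility, set $Q_1:=R_1-S_1-2c(u)u_x$ and $Q_2:=R_2-S_2-2c(u)v_x$. Subtracting the $R_i$- and $S_i$-equations of \eqref{1.12} (which cancels the transport term $c(R_{ix}+S_{ix})$ against $c(u)u_{xt}$, resp.\ $c(u)v_{xt}$) yields the decoupled linear ODEs
\begin{equation*}
\partial_t Q_i=\fr{c'(u)(R_1+S_1)}{2c(u)}\,Q_i,\qquad i=1,2.
\end{equation*}
At $t=0$, $u(0,\cdot)=\varphi_1$ forces $u_x(0,\cdot)=0$ and $c(\varphi_1)=0$, and $R_i(0,x)=S_i(0,x)$ by \eqref{1.13}, so $Q_i(0,x)=0$. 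The explicit formulas for $R_{it}(0,x)$, $S_{it}(0,x)$ in \eqref{1.13} (in particular $R_{1t}(0,x)=S_{1t}(0,x)=f_{11}(x)$ and $R_{2t}(0,x)-S_{2t}(0,x)=2c'(\varphi_1)\psi_1(x)\varphi_2'(x)$) further give $\partial_t Q_i|_{t=0}=0$, so $Q_i=o(t)$ as $t\to 0^+$.

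The main obstacle is the singular coefficient $1/c(u)$ in the ODE on the degenerating line $\{t=0\}$, which precludes a direct Gr\"onwall argument. To resolve this, I would exploit the parabolic nature of the degeneracy: under the hypotheses $|c'|\ge m_0>0$ and $|\psi_1|\ge\psi_0>0$, one has $\partial_t c(u)|_{t=0}=c'(\varphi_1)\psi_1(x)\ne 0$, whence $c(u(t,x))=c'(\varphi_1)\psi_1(x)\,t+O(t^2)$ and the coefficient above takes the Fuchsian form $1/t+O(1)$ near $t=0$. An indicial analysis then shows that every nonzero solution of $\partial_t Q=(1/t+O(1))Q$ is of exact order $t$ near the origin, so the bound $Q_i=o(t)$ forces $Q_1\equiv Q_2\equiv 0$ on $[0,\delta]\times\mathbb{R}$. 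With the compatibility in hand, the algebraic reversal described above produces \eqref{1.1} and completes the proof of Theorem \ref{thm2}.
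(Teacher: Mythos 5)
Your argument is correct and follows essentially the same route as the paper: both isolate the compatibility quantities $H_i=Q_i=R_i-S_i-2c(u)\,\partial_x(\cdot)$, derive the singular linear transport equation $\partial_t Q_i=\tfrac{c'(R_1+S_1)}{2c}Q_i$ from \eqref{1.12}, and conclude $Q_i\equiv 0$ from the second-order vanishing at $t=0$. The only stylistic difference is in the last step: the paper passes to the hodograph variables via \eqref{2.3}, obtaining the clean form $\partial_\tau H_i=H_i/\tau$ and noting $H_i/\tau=0$ on $\tau=0$, whereas you stay in $(t,x)$ and run the Fuchsian indicial analysis directly — but since $\tau=-c(u)\sim c'(\varphi_1)\psi_1(x)t$ near $t=0$, these are two phrasings of the same singular uniqueness argument.
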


\section{The reformulation of the problem}

We just only deal with the case $c'(u)\leq-m_0$ and $\psi_1(x)\geq\psi_0$, the other cases can be discussed analogously. Introduce the partial hodograph transformation $(x,t)\rightarrow(y,\tau)$ by defining
\begin{align}\label{2.1}
\tau=-c(u(t,x)),\quad y=x.
\end{align}
Thanks to \eqref{1.12a}, the jacobian of this transformation is
\begin{align}\label{2.2}
J:=\fr{\partial(y, \tau)}{\partial(x, t)}=y_x\tau_t-y_t\tau_x=-c_uu_t=-c'\fr{R_1+S_1}{2},
\end{align}
which is strictly positive at $t=0$ by the assumptions. Furthermore, we have
\begin{align}\label{2.3}
\partial_t=-\fr{c'(R_1+S_1)}{2}\partial_\tau,\quad \partial_x=\partial_y+\fr{c'(R_1-S_1)}{2\tau}\partial_\tau.
\end{align}
In terms of the new coordinates $(y, \tau)$, system \eqref{1.12} can be rewritten as
\begin{align}\label{2.4}
\left\{
\begin{array}{l}
\dps   R_{1\tau}-\fr{\tau}{c'S_1}R_{1y}=\fr{R_1+S_1}{4S_1}\fr{R_1-S_1}{\tau}+\fr{\lambda aa'}{c'S_1}\fr{R_2-S_2}{\tau}
-\fr{a^2}{4S_1}\fr{(R_2-S_2)^2}{\tau}-\fr{aa'}{c'S_1}R_2S_2,\\[6pt]
\dps  S_{1\tau}+\fr{\tau}{c'R_1}S_{1y}=\fr{R_1+S_1}{4R_1}\fr{S_1-R_1}{\tau}+\fr{\lambda aa'}{c'R_1}\fr{R_2-S_2}{\tau}
-\fr{a^2}{4R_1}\fr{(R_2-S_2)^2}{\tau}-\fr{aa'}{c'R_1}R_2S_2,\\[6pt]
\dps  R_{2\tau}-\fr{\tau}{c'S_1}R_{2y}=\fr{R_1}{2S_1}\fr{R_2-S_2}{\tau}-\fr{\lambda a'}{ac'S_1}\fr{R_1-S_1}{\tau} +\fr{a'}{ac'S_1}(R_1S_2+R_2S_1),\\[6pt]
\dps  S_{2\tau}+\fr{\tau}{c'R_1}S_{2y}=\fr{S_1}{2R_1}\fr{S_2-R_2}{\tau}-\fr{\lambda a'}{ac'R_1}\fr{R_1-S_1}{\tau} +\fr{a'}{ac'R_1}(R_1S_2+R_2S_1),
\end{array}
\right.
\end{align}
with two decoupled equations
\begin{align}\label{2.5}
u_\tau=-\fr{1}{c'(u)},\quad v_\tau=-\fr{R_2+S_2}{c'(R_1+S_1)}.
\end{align}
Note that the equation for $u$ in \eqref{2.5} is a trivial equation and the equation for $v$ is not needed because the coefficients in system \eqref{2.4} are independent of $v$. Corresponding to \eqref{1.13}, one can easy to get the initial conditions of system \eqref{2.4} in the coordinates $(y, \tau)$
\begin{align}\label{2.7}
\begin{array}{c}
R_1(0,y)=S_1(0,y)=\psi_1(y),\quad  R_2(0,y)=S_2(0,y)=\psi_2(y), \\
R_{1\tau}(0,y)=S_{1\tau}(0,y)=g_{11}(y), \quad R_{2\tau}(0,y)=g_{21}(y),\quad S_{2\tau}(0,y)=g_{22}(y),
\end{array}
\end{align}
where
$$
g_{11}(y)=-\fr{f_{11}(y)}{c'(\varphi_1)\psi_1(y)},\quad g_{21}(y)=-\fr{f_{21}(y)}{c'(\varphi_1)\psi_1(y)},\quad g_{22}(y)=-\fr{f_{22}(y)}{c'(\varphi_1)\psi_1(y)}.
$$

We now homogenize the boundary conditions \eqref{2.7} of system \eqref{2.4} by introducing the new dependent variables as follows
\begin{align}\label{2.8}
\begin{array}{l}
U_1(\tau,y)=R_1(\tau,y)-\psi_1(y)-g_{11}(y)\tau,\qquad U_2(\tau,y)=S_1(\tau,y)-\psi_1(y)-g_{11}(y)\tau,\\
U_3(\tau,y)=R_2(\tau,y)-\psi_2(y)-g_{21}(y)\tau,\qquad U_4(\tau,y)=S_2(\tau,y)-\psi_2(y)-g_{22}(y)\tau,
\end{array}
\end{align}
from which,one has
\begin{align*}
\begin{array}{l}
R_1=U_1+\psi_1+g_{11}\tau,\quad
S_1=U_2+\psi_1+g_{11}\tau,
\\
R_2=U_3+\psi_2+g_{21}\tau,\quad
S_2=U_4+\psi_2+g_{22}\tau,
\end{array}
\end{align*}
and
\begin{align*}
\begin{array}{l}
R_1+S_1=U_1+U_2+2\psi_1+2g_{11}\tau,\qquad \qquad
R_1-S_1=U_1-U_2,\\
R_2+S_2=U_3+U_4+2\psi_2+(g_{21}+g_{22})\tau,\quad
R_2-S_2=U_3-U_4-2\varphi'_2\tau.
\end{array}
\end{align*}
It follows by \eqref{2.7} and \eqref{2.8} that
\begin{align}\label{2.9}
\begin{array}{l}
U_i(0,y)=U_{i\tau}(0,y)=0,\ (i=1,2,3,4).
\end{array}
\end{align}
By performing a direct calculation, we obtain the equations for $\textbf{U}=(U_1,U_2,U_3,U_4)^T$
\begin{align}\label{2.10}
\left\{
\begin{array}{l}
\dps  U_{1\tau}-\fr{\tau}{c'(U_2+g)}U_{1y}=\fr{U_1-U_2}{2\tau}+\fr{\lambda aa'}{c'(U_2+g)}\fr{U_3-U_4}{\tau}+T_1(\tau,y,\textbf{U}),\\[8pt]
\dps U_{2\tau}+\fr{\tau}{c'(U_1+g)}U_{2y}=\fr{U_2-U_1}{2\tau}+\fr{\lambda aa'}{c'(U_1+g)}\fr{U_3-U_4}{\tau}+T_2(\tau,y,\textbf{U}),\\[8pt]
\dps U_{3\tau}-\fr{\tau}{c'(U_2+g)}U_{3y}=\fr{U_3-U_4}{2\tau}-\fr{\lambda a'}{ac'(U_2+g)}\fr{U_1-U_2}{\tau}+T_3(\tau,y,\textbf{U}),\\[8pt]
\dps U_{4\tau}+\fr{\tau}{c'(U_1+g)}U_{4y}=\fr{U_4-U_3}{2\tau}-\fr{\lambda a'}{ac'(U_1+g)}\fr{U_1-U_2}{\tau}+T_4(\tau,y,\textbf{U}),
\end{array}
\right.
\end{align}
where $g=\psi_1+g_{11}\tau$ and
\begin{align}\label{2.11}
\begin{array}{l}
\dps T_1(\tau,y,\textbf{U})=\fr{1}{4(U_2+g)}\fr{(U_1-U_2)^2}{\tau}-\fr{a^2}{4(U_2+g)}\fr{(U_3-U_4)^2}{\tau} +\dps\sum_{j=1}^4T_{1j}U_j+F_1\tau,\\
\dps T_2(\tau,y,\textbf{U})=\fr{1}{4(U_1+g)}\fr{(U_2-U_1)^2}{\tau}-\fr{a^2}{4(U_1+g)}\fr{(U_3-U_4)^2}{\tau} +\dps\sum_{j=1}^4T_{2j}U_j+F_2\tau,\\
\dps T_3(\tau,y,\textbf{U})=\fr{U_1-U_2}{2(U_2+g)}\cdot\fr{U_3-U_4}{\tau}+\dps\sum_{j=1}^4T_{3j}U_j+F_3\tau,\\
\dps T_4(\tau,y,\textbf{U})=\fr{U_2-U_1}{2(U_1+g)}\cdot\fr{U_4-U_3}{\tau}+\dps\sum_{j=1}^4T_{4j}U_j+F_4\tau.
\end{array}
\end{align}
Here the detailed expressions of $T_{ij}$ and $F_i\ (i,j=1,2,3,4)$ are given in Appendix \ref{app}.

Let $\mathcal{F}=\mathcal{F}(M,\delta)$ be a function class consisting of all continuously differentiable function $\textbf{u}=(u_1,u_2,u_3,u_4)^T$: $[0,\delta]\times R \rightarrow R^2$ satisfying the following properties:\\
$(P_1)\quad u_j(0,y)=u_{j\tau}(0,y)=0$, \quad $(j=1,2,3,4)$,\\
$(P_2)\quad \dps\sum_{j=1}^4\bigg\|\fr{u_j}{\tau^2}\bigg\|_{L^\infty}\leq M$,\\
$(P_3)\quad \dps\sum_{j=1}^4\bigg\|\fr{u_{jy}}{\tau^2}\bigg\|_{L^\infty}\leq M$,\\
$(P_4)\quad u_{jy}\ (j=1,2,3,4)$ are Lipschitz continuous with respect to $y$ with $\dps\sum_{j=1}^4\bigg\|\fr{u_{jyy}}{\tau^2}\bigg\|_{L^\infty}\leq M$,\\
where $M$ and $\delta$ are two positive constants. We use $\mathcal{H}$ to denote the function class containing only continuous functions on $[0,\delta]\times R$ satisfying only $(P_1)$ and $(P_2)$. It is easily known that
$\mathcal{F}$ is a subset of $\mathcal{H}$ and both of them are subsets of $C^0([0,\delta]\times R; R^2)$. Let $\textbf{u}$ and $\hat{\textbf{u}}$ be any two elements in $\mathcal{H}$. We define a weighted metric on $\mathcal{F}$ and $\mathcal{H}$
\begin{align}\label{2.12}
\begin{array}{l}
d(\textbf{u},\hat{\textbf{u}}):=\dps\sum_{j=1}^4\bigg\|\fr{u_j-\hat{u}_j}{\tau^2}\bigg\|_{L^\infty}.
\end{array}
\end{align}
One can check that $(\mathcal{H},d)$ is a complete metric space, while $(\mathcal{F},d)$ is not a closed subset in $(\mathcal{H},d)$.

Our strategy is to show first the existence of classical solutions for the homogeneous initial value problem
\eqref{2.10} \eqref{2.9}. Then we obtain the main results Theorems \ref{thm1} and \ref{thm2} by the coordinate change $(y, \tau)\rightarrow(x, t)$. For the problem \eqref{2.10} \eqref{2.9}, we have

\begin{thm}\label{thm3}
Assume that the conditions listed in Theorem \ref{thm1} hold. Then there exist constants $\lambda_0>0$ and $\delta>0$ such that the degenerate hyperbolic problem \eqref{2.10} \eqref{2.9} with $|\lambda|\leq\lambda_0$ has a classical solution on $[0,\delta]\times R$ in the function class $\mathcal{F}$.
\end{thm}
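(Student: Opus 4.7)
The plan is to construct the classical solution as a fixed point of a Picard iteration in the weighted metric space $(\mathcal{H}, d)$. Given $\textbf{u} = (u_1, u_2, u_3, u_4)^T \in \mathcal{F}(M, \delta)$, I would define $T(\textbf{u}) = \textbf{U}$ by freezing every $\textbf{U}$ inside the coefficients and sources of \eqref{2.10} at $\textbf{u}$, and integrating the resulting linear transport equations along the characteristics $dy/d\tau = \mp \tau / c'(u_j + g)$ from $\tau = 0$ up to the target time. Because the initial data \eqref{2.9} vanish together with their $\tau$-derivatives, the result admits the explicit integral representation
\begin{align*}
U_j(\tau, y) = \int_0^\tau \bigl[\text{RHS}_j \text{ of \eqref{2.10}, with } \textbf{u} \text{ substituted for } \textbf{U}\bigr]\bigl(\sigma, Y_j(\sigma;\tau,y)\bigr)\, d\sigma ,
\end{align*}
where $Y_j$ denotes the appropriate characteristic. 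The structural observation that makes the whole approach work is that for $\textbf{u} \in \mathcal{F}$, every $1/\tau$ factor on the right-hand sides of \eqref{2.10} meets either a difference $u_i - u_j$ or a quadratic expression in such differences; since these are $O(\tau^2)$, each integrand is $O(\tau)$ and the integrals are automatically $O(\tau^2)$, which is exactly the regularity encoded in $(P_2)$.

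I would then prove two estimates. First, invariance $T(\mathcal{F}(M,\delta)) \subset \mathcal{F}(M,\delta)$ by a representative inequality of the form
\begin{align*}
\frac{|U_j(\tau, y)|}{\tau^2} \le \tfrac12 M + C|\lambda|\,M + C(M)\delta,
\end{align*}
in which the first term is the contribution from the principal linear singularity $(u_1-u_2)/(2\tau)$, the second from the $\lambda$-coupling $\lambda aa'(u_3-u_4)/(\tau c'(u_2+g))$ that links the $(U_1,U_2)$ block to the $(U_3,U_4)$ block, and the last absorbs the quadratic remainders in $T_j$ together with the inhomogeneous $F_j\tau$. Forcing $C|\lambda| \le 1/4$ and $C(M)\delta \le M/4$ closes the estimate; the derivative bounds $(P_3), (P_4)$ follow by differentiating the integral representation in $y$ and running the same argument for the transport equations satisfied by $U_{jy}$ and $U_{jyy}$, using that $Y_{j,y}$ and $Y_{j,yy}$ are controlled by $(P_3), (P_4)$ of $\textbf{u}$. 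Second, contraction on $(\mathcal{H}, d)$: comparing two iterates and using the same machinery together with an estimate on $|Y_j - \hat Y_j|$ obtained from Gronwall applied to the difference of characteristic ODEs, I would derive
\begin{align*}
d(T\textbf{u}, T\hat{\textbf{u}}) \le \bigl(\tfrac12 + C|\lambda| + C(M)\delta\bigr)\, d(\textbf{u}, \hat{\textbf{u}}),
\end{align*}
which is a strict contraction once $|\lambda| \le \lambda_0$ and $\delta$ are small.

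Completeness of $(\mathcal{H}, d)$ then supplies a unique fixed point $\textbf{U}^* \in \mathcal{H}$. The subtle point — and the reason the statement claims $\textbf{U}^* \in \mathcal{F}$ rather than merely in $\mathcal{H}$ — is that $\mathcal{F}$ is not closed in $(\mathcal{H}, d)$, so a priori the limit retains only $(P_1)$ and $(P_2)$. I would recover $(P_3)$ and $(P_4)$ by invoking the uniform bounds $\|u^{(n)}_{jy}/\tau^2\|_{L^\infty}, \|u^{(n)}_{jyy}/\tau^2\|_{L^\infty} \le M$ that the invariance step grants the iterates $\textbf{u}^{(n)} = T^n(\mathbf{0})$, and applying Arzelà–Ascoli on compact sets $[0,\delta]\times[-R,R]$ to extract a subsequence whose first two $y$-derivatives converge; identifying the limits with the $y$-derivatives of $\textbf{U}^*$ promotes $\textbf{U}^*$ to $\mathcal{F}$. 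A direct verification that $\textbf{U}^*$ then satisfies \eqref{2.10} together with \eqref{2.9} classically completes the argument. I expect the principal obstacle to be exactly the $\lambda$-coupling: unlike the $\lambda = 0$ regime, the $(U_1,U_2)$ and $(U_3,U_4)$ blocks do not decouple in the singular part, so the $|\lambda| \le \lambda_0$ smallness is not a technical luxury but is forced upon us in order to close both the invariance and the contraction constants below $1$.
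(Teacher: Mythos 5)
Your proposal follows the same overall route as the paper: freeze $\textbf{U}$ at $\textbf{u}$ in \eqref{2.10}, integrate along the characteristics generated by $\Lambda_\pm$ starting from the homogeneous data \eqref{2.9}, and run a contraction-mapping argument in $(\mathcal{H},d)$, with the $|\lambda|\le\lambda_0$ smallness needed to keep the coupling between the $(U_1,U_2)$ and $(U_3,U_4)$ blocks from spoiling the contraction constant. Your invariance and contraction estimates are in the same spirit as the paper's Lemma \ref{lem1}; the only cosmetic difference is that you propose to compare characteristics directly via Gronwall, whereas the paper writes the difference equation for $U_i-\hat U_i$ along the characteristics of $\textbf{u}$ and absorbs the characteristic mismatch as a source term (the term $I_8=[\Lambda_+(\hat u_2)-\Lambda_+(u_2)](\hat U_{1y}+\hat U_{3y})$), which avoids estimating $|Y_j-\hat Y_j|$ directly. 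Both variants close.

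There is, however, a genuine gap in your Step 3, the passage from the fixed point in $(\mathcal{H},d)$ to membership in $\mathcal{F}$ and classical solvability. You recover $(P_3)$--$(P_4)$ by invoking only the uniform bounds $\|u^{(n)}_{jy}/\tau^2\|_{L^\infty},\|u^{(n)}_{jyy}/\tau^2\|_{L^\infty}\le M$ and then Arzel\`a--Ascoli. But Arzel\`a--Ascoli on $[0,\delta]\times[-R,R]$ requires equicontinuity of $\{u^{(n)}_{jy}\}$ in \emph{both} variables; the $u_{jyy}$-bound controls only the $y$-modulus. To get equicontinuity in $\tau$ (and, more fundamentally, to show that $\textbf{U}^*$ is $C^1$ in $\tau$, which is required both by the definition of $\mathcal{F}$ as a class of continuously differentiable functions with $u_{j\tau}(0,y)=0$ and by the very meaning of ``classical solution'') you must separately establish uniform bounds on the $\tau$-derivatives $\partial_\xi U_i$ and on the second derivatives $\partial_\xi\partial_\eta U_i$ and $\partial_\xi^2 U_i$ of the iterates. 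Your proposal never mentions these. The paper devotes a separate lemma (Lemma \ref{lem2}) to exactly this: it shows $\sum_i|\partial_\xi U_i|\le 2M\xi$, $\sum_i|\partial_{\xi\eta}U_i|\le 2M\xi$, $\sum_i|\partial_{\xi\xi}U_i|\le 16M$, whence $\{\partial_\tau\textbf{u}^{(n)}\}$ and $\{\partial_y\textbf{u}^{(n)}\}$ are uniformly Lipschitz on $[0,\delta]\times R$, and only then does the compactness argument deliver the limit in $\mathcal{F}$ as a bona fide classical solution. Without these $\tau$-derivative and mixed-derivative estimates your Step 3 does not close.
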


\section{The proof of the main results}

In this section, we establish Theorem \ref{thm3} by the fixed point method and then complete the proof of Theorems \ref{thm1} and \ref{thm2} by converting the solution in terms of the variables $(x,t)$. The process is divided into several steps.

\textbf{Step 1: The iteration mapping.} Denote
\begin{align}\label{3.1}
\Lambda_+(U_2)=-\fr{\tau}{c'(U_2+g)},\quad \Lambda_-(U_1)=\fr{\tau}{c'(U_1+g)},
\end{align}
and
\begin{align}\label{3.2}
\fr{{\rm d}}{{\rm d}_+(U_2)}=\partial_\tau+\Lambda_+(U_2)\partial_y,\quad \fr{{\rm d}}{{\rm d}_-(U_1)}=\partial_\tau+\Lambda_-(U_1)\partial_y.
\end{align}
Let $\textbf{u}=(u_1,u_2,u_3,u_4)^T(\tau,y)$ be an element in the set $\mathcal{F}$. We consider the linear system
\begin{align}\label{3.3}
\left\{
\begin{array}{l}
\dps \fr{\rm d}{{\rm d}_+(u_2)}U_1=\fr{u_1-u_2}{2\tau}+\fr{\lambda a'a}{c'(u_2+g)}\fr{u_3-u_4}{\tau}+T_1(\tau,y,\textbf{u}),\\[8pt]
\dps \fr{{\rm d}}{{\rm d}_-(u_1)}U_2=\fr{u_2-u_1}{2\tau}+\fr{\lambda a'a}{c'(u_1+g)}\fr{u_3-u_4}{\tau}+T_2(\tau,y,\textbf{u}),\\[8pt]
\dps \fr{{\rm d}}{{\rm d}_+(u_2)}U_3=\fr{u_3-u_4}{2\tau}-\fr{\lambda a'}{ac'(u_2+g)}\fr{u_1-u_2}{\tau}+T_3(\tau,y,\textbf{u}),\\[8pt]
\dps \fr{{\rm d}}{{\rm d}_-(u_1)}U_4=\fr{u_4-u_3}{2\tau}-\fr{\lambda
a'}{ac'(u_1+g)}\fr{u_1-u_2}{\tau}+T_4(\tau,y,\textbf{u}).
\end{array}
\right.
\end{align}
From \eqref{3.3} and \eqref{2.9}, we acquire
\begin{align}\label{3.4}
\left\{
\begin{array}{l}
U_1(\xi,\eta)=\dps\int_{0}^\xi\bigg\{\fr{u_1-u_2}{2\tau}+\fr{\lambda a'a}{c'(u_2+g)}\fr{u_3-u_4}{\tau}+T_1\bigg\}(\tau,y_+(\tau;\xi,\eta)){\rm d}\tau,\\[8pt]
U_2(\xi,\eta)=\dps\int_{0}^\xi\bigg\{\fr{u_2-u_1}{2\tau}+\fr{\lambda a'a}{c'(u_1+g)}\fr{u_3-u_4}{\tau}+T_2\bigg\}(\tau,y_-(\tau;\xi,\eta)){\rm d}\tau,\\[8pt]
U_3(\xi,\eta)=\dps\int_{0}^\xi\bigg\{\fr{u_3-u_4}{2\tau}-\fr{\lambda a'}{ac'(u_2+g)}\fr{u_1-u_2}{\tau}+T_3\bigg\}(\tau,y_+(\tau;\xi,\eta)){\rm d}\tau,\\[8pt]
U_4(\xi,\eta)=\dps\int_{0}^\xi\bigg\{\fr{u_4-u_3}{2\tau}-\fr{\lambda a'}{ac'(u_1+g)}\fr{u_1-u_2}{\tau}+T_4\bigg\}(\tau,y_-(\tau;\xi,\eta)){\rm d}\tau,
\end{array}
\right.
\end{align}
where $y_+(\tau;\xi,\eta)$ and $y_-(\tau;\xi,\eta)$ are defined as
\begin{align}\label{3.5}
\left\{
\begin{array}{l}
\dps\fr{{\rm d}y_+}{{\rm d}\tau}=\Lambda_+(u_2),\\
y_+(\xi;\xi,\eta)=\eta,
\end{array}
\right.
\left\{
\begin{array}{l}
\dps\fr{{\rm d}y_-}{{\rm d}\tau}=\Lambda_-(u_1),\\
y_-(\xi;\xi,\eta)=\eta,
\end{array}
\right.
\end{align}
and $T_1(\tau,y_+(\tau;\xi,\eta))=T_1(\tau,y_+(\tau;\xi,\eta),\textbf{u}(\tau,y_+(\tau;\xi,\eta)))$, etc.
Based on \eqref{3.4}, we arrive at a mapping
\begin{align*}
\mathcal{T} \left( \left(
 \begin{array}{c}
    u_1 \\
    u_2 \\
    u_3 \\
    u_4
 \end{array}
\right )\right ) \quad
=\ \ \ \left(
 \begin{array}{c}
    U_1 \\
    U_2 \\
    U_3 \\
    U_4
 \end{array}
\right).
\end{align*}
Hence the problem is changed to find a fixed point of the mapping $\mathcal{T}$ in the set $\mathcal{F}$.

\textbf{Step 2: Properties of the mapping.} Throughout the paper, we use the notation $K>1$ to denote a constant depending only on the constants $m_0$, $\psi_0$ and the $C^3$ norms of $c, a, \varphi'_2$, $\psi_1$, $\psi_2$,  which may change from one line to the next.

Thanks to $(u_1,u_2,u_3,u_4)^T\in\mathcal{F}$, we find by \eqref{1.8} and \eqref{1.14} that there exists a small constant $\delta_0>0$ such that for $\tau\leq\delta_0$
\begin{align}\label{4.1}
|c'(u_i+g)|=|c'(u_i+\psi_1+g_{11}\tau)|\geq|c'\psi_1|-|c'u_i+c'g_{11}\tau| \nonumber \\
\qquad\qquad\quad\geq m_0\psi_0-\delta_0(|c'|M\delta_0+|c'g_{12}|)\geq\fr{m_0\psi_0}{2}>0,
\end{align}
and
\begin{align}\label{4.2}
|ac'(u_i+g)|\geq|a|\cdot|c'|\cdot|u_i+g|\geq\fr{m_{0}^2\psi_0}{2}>0,
\end{align}
for $i=1,2$.

We now establish the properties of the mapping $\mathcal{T}$.
\begin{lem}\label{lem1}
Let the assumptions in Theorem \ref{thm3} hold. Then there exist positive constants $\delta\leq\delta_0$, $\lambda_0$, $M$ and $0<\kappa<1$ depending only on constants $m_0$,$\psi_0$ and the $C^3$ norms of $c$, $a$, $\varphi'_2$, $\psi_1$, $\psi_2$ such that for $|\lambda|\leq\lambda_0$ \\
$(1)\ \mathcal{T}$ map $\mathcal{F}$ into $\mathcal{F}$;\\
$(2)$ For any pair $\textbf{u}$, $\hat{\textbf{u}}$ in $\mathcal{F}$,there holds\\
\begin{align}\label{3.6}
\begin{array}{l}
d(\mathcal{T}(\textbf{u}),\mathcal{T}(\hat{\textbf{u}}))\leq \kappa d(\textbf{u},\hat{\textbf{u}}).
\end{array}
\end{align}
\end{lem}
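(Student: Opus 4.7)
The plan is to verify $(P_1)$--$(P_4)$ for $\mathcal{T}(\textbf{u})$ and the contraction estimate by exploiting the integral representation \eqref{3.4} together with the fact that, whenever $\textbf{u}\in\mathcal{F}$, every $1/\tau$--singular term in the integrands of \eqref{3.4} is multiplied by a factor that vanishes at least linearly at $\tau=0$. Indeed, from $(P_2)$ one has $(u_i-u_j)/\tau=O(M\tau)$ and $(u_i-u_j)^2/\tau=O(M^2\tau^3)$; together with the denominator lower bounds \eqref{4.1}--\eqref{4.2} and the boundedness of the coefficients $T_{ij},F_i$ from Appendix \ref{app}, this shows that each integrand in \eqref{3.4} is of order $\tau$ uniformly on $[0,\delta]\times R$.

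For part (1), $(P_1)$ is immediate because $U_j(0,\eta)=0$ from the integral and $U_{j\tau}(0,\eta)$ equals the integrand at $\tau=0$, which vanishes. For $(P_2)$, each term of the integrand is bounded pointwise by a constant times $\tau$, yielding $|U_j|\leq C(M,\delta,|\lambda|)\xi^2$ with $C(M,\delta,|\lambda|)\leq K(1+|\lambda|)+K(M+M^2)\delta$. One first fixes $M$ large enough that the leading part $K(1+|\lambda|)$ lies well below $M/2$, then chooses $\delta$ and $\lambda_0$ small so that $C\leq M$. For $(P_3)$--$(P_4)$ we differentiate \eqref{3.4} in $\eta$: the characteristics $y_\pm(\tau;\xi,\eta)$ from \eqref{3.5} are smooth in $\eta$ because $\Lambda_\pm=O(\tau)$ with Lipschitz $y$--dependence, and a Gronwall estimate yields $|\partial y_\pm/\partial\eta-1|\leq K\delta^2$. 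The $\eta$--derivatives of the integrand again involve $u_{jy}/\tau=O(\tau)$ (by $(P_3)$ for $\textbf{u}$) and $u_{jyy}/\tau=O(\tau)$ (by $(P_4)$), giving bounds $|U_{jy}|,|U_{jyy}|\leq O(\xi^2)$ and Lipschitz continuity of $U_{jy}$ in $y$, closing the bootstrap with the same threefold smallness choice of $M$, $\delta$, $\lambda_0$.

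For part (2), we subtract the two integral representations and split the difference $\mathcal{T}(\textbf{u})-\mathcal{T}(\hat{\textbf{u}})$ into (i) the integrand difference evaluated on a common characteristic and (ii) the contribution from the characteristic perturbation. In (i), each elementary difference $(u_i-u_j)/\tau-(\hat u_i-\hat u_j)/\tau$ is bounded by $2\tau\,d(\textbf{u},\hat{\textbf{u}})$, while the quadratic pieces in $T_i$ factor as $(u_i+\hat u_i)(u_i-\hat u_i)/\tau=O(M\tau^3\,d)$ and remain integrable; in (ii), a Gronwall argument applied to the difference of \eqref{3.5} gives $|y_\pm-\hat y_\pm|(\tau)\leq K\tau^2\,d(\textbf{u},\hat{\textbf{u}})$, which multiplied by the $y$--derivative of the integrand (of order $\tau$) produces an $O(\xi^4)$ contribution. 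Dividing by $\xi^2$ yields $d(\mathcal{T}(\textbf{u}),\mathcal{T}(\hat{\textbf{u}}))\leq(K\delta+|\lambda|K)\,d(\textbf{u},\hat{\textbf{u}})$, and shrinking $\delta$ and $\lambda_0$ further ensures $\kappa<1$.

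The main obstacle I expect is the $\lambda$--dependent terms $\frac{\lambda aa'}{c'(u_i+g)}\frac{u_3-u_4}{\tau}$ in \eqref{3.3} and the analogous terms in the equations for $U_3,U_4$: their contribution to $U_j/\tau^2$ and to the contraction factor is of order $|\lambda|$ and \emph{does not decrease} by shrinking $\delta$, which is precisely why $\lambda_0$ must be fixed independently of $\delta$, matching the introduction's remark that the chirality parameter has a decisive influence on the analysis. A secondary technical care is the simultaneous choice of the three constants $M,\delta,\lambda_0$: one must verify that the invariance and contraction bounds can be closed with a single choice, which is routine because the constants $K$ depend only on the data norms of $c,a,\varphi_2',\psi_1,\psi_2$ and the absolute bounds in \eqref{4.1}--\eqref{4.2}, not on $M$ itself.
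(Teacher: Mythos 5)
Your overall approach is the paper's: integrate along the characteristics in the integral representation \eqref{3.4}, use $(P_2)$--$(P_4)$ to control the $1/\tau$--singular terms, differentiate under the integral sign in $\eta$ (with a Gronwall bound on $\partial y_\pm/\partial\eta$) for $(P_3)$--$(P_4)$, and split the contraction into integrand differences along a common characteristic plus a characteristic-perturbation term. Your remark that the $\lambda$-contribution does not shrink with $\delta$ is also the right observation, matching the paper's treatment of $\lambda_0$.

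There is, however, a genuine quantitative gap in how you propose to close the estimates. You claim $|U_j|/\xi^2\leq C$ with $C\leq K(1+|\lambda|)+K(M+M^2)\delta$, and in the contraction $d(\mathcal{T}(\textbf{u}),\mathcal{T}(\hat{\textbf{u}}))\leq(K\delta+|\lambda|K)\,d(\textbf{u},\hat{\textbf{u}})$, i.e.\ you expect the factor to tend to $0$ as $\delta,|\lambda|\to0$. This cannot be correct. The dominant singular term $\frac{u_1-u_2}{2\tau}$ (and its analogues for the other components) contributes, via $|u_1-u_2|\leq M\tau^2$ and $\int_0^\xi\frac{M\tau}{2}\,{\rm d}\tau=\frac{M}{4}\xi^2$, an amount proportional to $M$ \emph{with no factor of $\delta$ or $|\lambda|$}; summing over all four components produces the fixed term $\frac{M}{2}$ in the invariance bound, and in exactly the same way the $I_3$ term in the contraction gives a fixed $\frac{1}{2}d(\textbf{u},\hat{\textbf{u}})$. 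So the correct shape of the estimate is
\begin{align*}
\sum_{j=1}^4\left|\frac{U_j(\xi,\eta)}{\xi^2}\right|\leq M\left(\frac{1}{2}+|\lambda|K+\frac{K}{M}(1+M\delta)^2\right),\qquad
\kappa=\frac{1}{2}+|\lambda|K(1+M\delta)+K(1+M\delta)^2\delta+KM\delta^4,
\end{align*}
and the argument works not because everything is small, but because the structural coefficient $\frac{1}{2}$, which is inherited from the $\frac{1}{4}$ factors in \eqref{2.4} and the extra $\frac{1}{2}$ from integrating $\tau$, is strictly less than $1$, leaving room for the $O(|\lambda|)$ and $O(\delta)$ corrections. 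This is why the paper must fix $M\geq 64K$, $\lambda_0\leq 1/(32K)$ and $\delta\leq\min\{1/M,\delta_0\}$ \emph{simultaneously}; if the structural coefficient had turned out to be $\geq1$, no choice of $M,\delta,\lambda_0$ would help. Your write-up omits this pivotal $\frac12$ and would mislead a reader into thinking the contraction factor can be driven to $0$ by shrinking parameters, which it cannot.

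A secondary remark: your Gronwall claim $|\partial y_\pm/\partial\eta-1|\leq K\delta^2$ is the right order of magnitude but should be stated as a bound of the form $|\partial y_\pm/\partial\eta|\leq e^{K\delta^2(1+M\delta)}$, as the $M$-dependence enters through $\partial_y\Lambda_\pm$ and must be carried along; since $M\delta\leq1$ under the paper's choice, this is still close to $1$.
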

\begin{proof}
Let $\textbf{u}=(u_1,u_2,u_3,u_4)^T$ and $\hat{\textbf{u}}=(\hat{u}_1,\hat{u}_2,\hat{u}_3,\hat{u}_4)^T$ be two elements in the set $\mathcal{F}$. We denote $\textbf{U}=\mathcal{T}(\textbf{u})=(U_1,U_2,U_3,U_4)^T$ and $\hat{\textbf{U}}=\mathcal{T}(\hat{\textbf{u}})=(\hat{U}_1,\hat{U}_2,\hat{U}_3,\hat{U}_4)$. It follows by $\textbf{u}\in\mathcal{F}$ that
\begin{align}\label{3.7}
\begin{array}{c}
|u_1-u_2|+|u_3-u_4|\leq M\tau^2, \\
|u_{1y}-u_{2y}|+|u_{3y}-u_{4y}|\leq M\tau^2, \\
|u_{1yy}-u_{2yy}|+|u_{3yy}-u_{4yy}|\leq M\tau^2.
\end{array}
\end{align}
Moreover, we set
\begin{align*}
A_1=\fr{a'a}{c'(u_2+g)},\quad A_2=\fr{a'a}{c'(u_1+g)},\quad A_3=-\fr{a'}{ac'(u_2+g)},\quad A_4=-\fr{a'}{ac'(u_1+g)},
\end{align*}
then there hold by \eqref{4.1} and \eqref{4.2}
\begin{align}\label{3.8}
|\lambda A_i|\leq \lambda K,\ (i=1,2,3,4).
\end{align}
Combining with \eqref{3.7} and \eqref{3.8}, we obtain the estimate of $T_1$ by using the detailed expressions of $T_{1i}\ (i=1,2,3,4)$ and $F_1$
\begin{align}\label{3.10}
|T_1|\leq & \bigg|\fr{c'}{4 c'(u_2+g)}\fr{(u_1-u_2)^2}{\tau}\bigg|+\bigg|\fr{a^2c'}{4c'(u_2+g)}\fr{(u_3-u_4)^2}{\tau}\bigg|
+\dps\sum^{4}_{j=1}|T_{1j}u_j|+|F_1|\tau \nonumber \\
& \leq KM^2\tau^3+KM^2\tau^3+K(1+M\delta)M\tau^2+K(1+M\delta)\tau \nonumber \\
& \leq K\tau(1+M\delta)^2.
\end{align}
Similar arguments lead to
\begin{align}\label{3.11}
|T_i|\leq K\tau(1+M\delta)^2,\ (i=1,2,3,4).
\end{align}
Summing up \eqref{3.7}, \eqref{3.8} and \eqref{3.11} gives
\begin{align}\label{3.11a}
U_i(0,\eta)=0,\ (i=1,2,3,4).
\end{align}

Furthermore, it suggests by \eqref{3.4}, \eqref{3.8} and \eqref{3.11} that
\begin{align}\label{3.11b}
&|U_1(\xi,\eta)|+|U_3(\xi,\eta)|\leq \dps\int_{0}^\xi\bigg\{\fr{|u_1-u_2|+|u_3-u_4|}{2\tau}+\bigg|\lambda A_1\fr{u_3-u_4}{\tau}\bigg| \nonumber \\
&\qquad \qquad \qquad \qquad \qquad \qquad \quad     +\bigg|\lambda A_3\fr{u_1-u_2}{\tau}\bigg|+|T_1|+|T_3|\bigg\}\ {\rm d}\tau,\nonumber \\
\leq &\dps\int_{0}^\xi\bigg\{\fr{M}{2}\tau+|\lambda| KM\tau+|\lambda| KM\tau+K\tau(1+M\delta)^2+K\tau(1+M\delta)^2\bigg\}\ {\rm d}\tau \nonumber \\
\leq & \xi^2\bigg\{\fr{M}{4}+|\lambda| KM+K(1+M\delta)^2\bigg\}.
\end{align}
In a similar way, one gets
\begin{align*}
|U_2(\xi,\eta)|+|U_4(\xi,\eta)|\leq \xi^2\bigg\{\fr{M}{4}+|\lambda| KM+K(1+M\delta)^2\bigg\},
\end{align*}
which along with \eqref{3.11b} yields
\begin{align}\label{3.12}
\dps\sum^{4}_{j=1}\bigg |\fr{U_j(\xi,\eta)}{\xi^2}\bigg |\leq M\bigg(\fr{1}{2}+|\lambda| K+\fr{K}{M}(1+M\delta)^2\bigg).
\end{align}

We now differentiate $U_1(\xi,\eta)$ and $U_3(\xi,\eta)$ with respect to $\eta$ and add the results to achieve
\begin{align}\label{3.13}
\fr{\partial U_1(\xi,\eta)}{\partial \eta} +\fr{\partial U_3(\xi,\eta)}{\partial \eta}=&\dps\int_{0}^\xi\bigg\{\fr{u_{1y}-u_{2y}}{2\tau}
+\fr{u_{3y}-u_{4y}}{2\tau}+\lambda\pa_y\bigg(A_1\fr{u_3-u_4}{\tau}\bigg) \nonumber \\
&\qquad \quad +\lambda\pa_y\bigg(A_3\fr{u_1-u_2}{\tau}\bigg)+T_{1y}+T_{3y}\bigg\}\fr{\partial y_+}{\partial \eta}\ {\rm d}\tau,
\end{align}
where
\begin{align*}
\fr{\partial y_+}{\partial \eta}(\tau;\xi,\eta)=\exp\bigg(\dps\int_{\xi}^\tau \fr{\partial \Lambda_+(u_2)}{\partial y}(s,y_+(s;\xi,\eta))\ {\rm d}s\bigg).
\end{align*}
Next we derive a series of estimates
\begin{align}\label{3.14}
|\partial_y(c'u_2+c'g)|=\big|c'u_{2y}+c''u_yu_2+c''u_yg+c'g_y\big|\leq K(1+M\delta),
\end{align}
\begin{align}\label{3.15}
&\bigg|\partial_y\bigg(A_1\fr{u_3-u_4}{\tau}\bigg)\bigg| =\bigg|\partial_y\bigg(\fr{a'a}{c'u_2+c'g}\fr{u_3-u_4}{\tau}\bigg)\bigg| \nonumber \\[3pt]
\leq &\bigg|\fr{a'a}{c'u_2+c'g}\cdot\fr{u_{3y}-u_{4y}}{\tau}\bigg|
+\bigg|\fr{(a'a)_yc'(u_2+g)-a'a(c'u_2+c'g)_y}{(c'u_2+c'g)^2}\bigg|\cdot\bigg|\fr{u_3-u_4}{\tau}\bigg| \nonumber\\[3pt]
\leq &KM\tau+(K+K(1+M\delta))M\tau  \leq K\tau(1+M\delta)M,
\end{align}
and
\begin{align}\label{3.16}
\bigg|\partial_y\bigg(A_3\fr{u_1-u_2}{\tau}\bigg)\bigg|\leq K\tau(1+M\delta)M.
\end{align}
Moreover, we take a direct calculation
\begin{align}\label{3.17}
|T_{1y}|\leq &\bigg|\partial_y\bigg(\fr{1}{4(u_2+g)}\fr{(u_1-u_2)^2}{\tau}\bigg)\bigg| +\bigg|\partial_y\bigg(-\fr{a^2}{4(u_2+g)}\fr{(u_3-u_4)^2}{\tau}\bigg)\bigg| \nonumber \\
&+\dps\sum^{4}_{j=1}|T_{1jy}u_j|+\dps\sum^{4}_{j=1}|T_{1j}u_{jy}|+|F_{1y}|\tau \nonumber \\
\leq &KM^2\tau^3+K(1+M\delta)M^2\tau^3+KM^2\tau^3+K(1+M\delta)M^2\tau^3 \nonumber \\
&+K(1+M\delta)^2M\tau^2
+K(1+M\delta)M\tau^2+K(1+M\delta)^2\tau \nonumber \\
\leq & K\tau(1+M\delta)^3,
\end{align}
and similarly
\begin{align}\label{3.18}
|T_{3y}|\leq K\tau(1+M\delta)^3.
\end{align}
According to \eqref{3.14}, one has
\begin{align}\label{3.19}
|\partial_y\Lambda_+(u_2)|=\bigg|\fr{\partial_y(c'u_2+c'g)}{(c'u_2+c'g)^2}\tau\bigg|\leq K\tau(1+M\delta),
\end{align}
and then
\begin{align}\label{3.20}
\bigg|\fr{\partial y_+}{\partial \eta}\bigg|\leq \exp\bigg(\dps\int_{0}^\xi Ks(1+M\delta)\ {\rm d}s\bigg)\leq e^{K\delta^2(1+M\delta)}.
\end{align}
Combining\eqref{3.15}-\eqref{3.20} and applying \eqref{3.7}, we get
\begin{align}\label{3.21}
\bigg|\fr{\partial U_1}{\partial \eta}\bigg|+\bigg|\fr{\partial U_3}{\partial \eta}\bigg|\leq&\dps\int_{0}^\xi\bigg\{\fr{|u_{1y}-u_{2y}|+|u_{3y}-u_{4y}|}{2\tau} +|\lambda|\cdot\bigg|\pa_y\bigg(A_1\fr{u_3-u_4}{\tau}\bigg)\bigg| \nonumber \\
&\qquad \ \ +|\lambda|\cdot\bigg|\pa_y\bigg(A_3\fr{u_1-u_2}{\tau}\bigg)\bigg|
+|T_{1y}|+|T_{3y}|\bigg\}\bigg|\fr{\partial y_+}{\partial\eta}\bigg|\ {\rm d}\tau \nonumber \\
\leq & \dps\int_{0}^\xi\bigg\{\fr{1}{2}M\tau+|\lambda| K\tau(1+M\delta)M+|\lambda| K\tau(1+M\delta)M   \nonumber\\
&\qquad \ \ +K\tau(1+M\delta)^3+K\tau(1+M\delta)^3\bigg\}e^{K\delta^2(1+M\delta)}\ {\rm d}\tau \nonumber\\
\leq &\bigg\{\fr{1}{4}M\xi^2+|\lambda| K\xi^2(1+M\delta)M+K\xi^2(1+M\delta)^3\bigg\}e^{K\delta^2(1+M\delta)} \nonumber\\
\leq &\xi^2\bigg(\fr{1}{4}M+|\lambda| K(1+M\delta)M+K(1+M\delta)^3\bigg)e^{K\delta^2(1+M\delta)}.
\end{align}
Similar arguments for $U_2$ and $U_4$ yield
\begin{align*}
\bigg|\fr{\partial U_2}{\partial \eta}\bigg|+\bigg|\fr{\partial U_4}{\partial \eta}\bigg|\leq \xi^2\bigg(\fr{1}{4}M+|\lambda| K(1+M\delta)M+K(1+M\delta)^3\bigg)e^{K\delta^2(1+M\delta)},
\end{align*}
Which along with \eqref{3.21} gives
\begin{align}\label{3.22}
\dps\sum^{4}_{j=1}\bigg |\fr{U_{j\eta}(\xi,\eta)}{\xi^2}\bigg |\leq M\bigg(\fr{1}{2}+|\lambda| K(1+M\delta)+\fr{K}{M}(1+M\delta)^3\bigg)e^{K\delta^2(1+M\delta)}.
\end{align}

To establish the bounds of $\pa_{\eta\eta}U_{1}/\xi^2$ and $\pa_{\eta\eta}U_{3}/\xi^2$, we differentiate \eqref{3.13} with respect to $\eta$ again to get
\begin{align}\label{3.23}
\fr{\partial^2 U_1(\xi,\eta)}{\partial \eta^2} +\fr{\partial^2 U_3(\xi,\eta)}{\partial \eta^2}=&\dps\int_{0}^\xi\bigg\{I_1\bigg(\fr{\partial y_+}{\partial \eta}\bigg)^2+I_2\fr{\partial^2 y_+}{\partial \eta^2}\bigg\}\ {\rm d}\tau,
\end{align}
where
\begin{align*}
I_1=&\fr{u_{1yy}-u_{2yy}}{2\tau}
+\fr{u_{3yy}-u_{4yy}}{2\tau}+\lambda\fr{\partial^2}{\partial y^2}\bigg(A_1\fr{u_3-u_4}{\tau}\bigg) \nonumber \\
&+\lambda\fr{\partial^2}{\partial y^2}\bigg(A_3\fr{u_1-u_2}{\tau}\bigg)+T_{1yy}+T_{3yy}, \nonumber \\
I_2=&\fr{u_{1y}-u_{2y}}{2\tau}
+\fr{u_{3y}-u_{4y}}{2\tau}+\lambda\fr{\partial}{\partial y}\bigg(A_1\fr{u_3-u_4}{\tau}\bigg) \nonumber \\
&+\lambda\fr{\partial}{\partial y}\bigg(A_3\fr{u_1-u_2}{\tau}\bigg)+T_{1y}+T_{3y},
\end{align*}
and
\begin{align}\label{3.23a}
\fr{\partial^2 y_+}{\partial \eta^2}(\tau;\xi,\eta)=&\exp\bigg(\dps\int_{\xi}^\tau \fr{\partial \Lambda_+(u_2)}{\partial y}(s,y_+(s;\xi,\eta))\ {\rm d}s\bigg) \nonumber \\
&\times\dps\int_{\xi}^\tau \fr{\partial^2 \Lambda_+(u_2)}{\partial y^2}\cdot\fr{\partial y_+}{\partial \eta}(s,y_+(s;\xi,\eta))\ {\rm d}s.
\end{align}
By performing a direct calculation, one can arrive at
\begin{align}\label{3.24}
&\bigg|\fr{\pa^2}{\pa y^2}\bigg(A_1\fr{u_3-u_4}{\tau}\bigg)\bigg|=\bigg|\fr{\pa^2}{\pa y^2}\bigg(\fr{a'a}{c'u_2+c'g}\cdot\fr{u_3-u_4}{\tau}\bigg)\bigg| \nonumber \\[3pt]
\leq &\bigg|\fr{a'a}{c'u_2+c'g}\fr{u_{3yy}-u_{4yy}}{\tau}\bigg| +2\bigg|\fr{(a'a)_y(c'u_2+c'g)-a'a(c'u_2+c'g)_y}{(c'u_2+c'g)^2}\bigg|\cdot\bigg|\fr{u_{3y}-u_{4y}}{\tau}\bigg| \nonumber \\[3pt]
&+\bigg|\partial_y(\fr{(a'a)_y(c'u_2+c'g)-a'a(c'u_2+c'g)_y}{(c'u_2+c'g)^2})\bigg|\cdot \bigg|\fr{u_3-u_4}{\tau}\bigg| \nonumber \\
\leq &KM\tau+K(1+M\delta)M\tau+K(1+M\delta)^2M\tau\leq K\tau(1+M\delta)^2M.
\end{align}
Similarly, we have
\begin{align}\label{3.25}
&\qquad \bigg|\fr{\pa^2}{\pa y^2}\bigg(A_3\fr{u_1-u_2}{\tau}\bigg)\bigg|\leq K\tau(1+M\delta)^2M, \nonumber \\
&|T_{1yy}|\leq K\tau(1+M\delta)^4,\quad|T_{3yy}|\leq K\tau(1+M\delta)^4.
\end{align}
Furthermore, differentiating \eqref{3.19} with respect to $\eta$ leads to
\begin{align}\label{3.26}
\bigg|\fr{\pa^2}{\pa y^2}\Lambda_+(u_2)\bigg|\leq \bigg|\fr{\partial_{yy}(c'u_2+c'g)}{(c'u_2+c'g)^2}\tau\bigg|+\bigg|\fr{2[(c'u_2+c'g)_y]^2}{(c'u_2+c'g)^2}\tau\bigg|
\leq K\tau(1+M\delta)^2,
\end{align}
which combined with \eqref{3.20} and \eqref{3.23a} acquires
\begin{align}\label{3.27}
\bigg|\fr{\partial^2 y_+}{\partial \eta^2}\bigg|\leq & e^{K\delta^2(1+M\delta)}\dps\int_{0}^\xi\bigg|\fr{\partial^2}{\partial y^2}\Lambda_+(u_2)\bigg|\cdot e^{K\delta^2(1+M\delta)}\ {\rm d}s \nonumber \\
\leq &e^{K\delta^2(1+M\delta)}\dps\int_{0}^\xi Ks(1+M\delta)^2\ {\rm d}s\leq \xi^2K(1+M\delta)^2e^{K\delta^2(1+M\delta)}.
\end{align}
Inserting \eqref{3.24}-\eqref{3.27} into \eqref{3.23}, one obtains
\begin{align}\label{3.28}
&\bigg|\fr{\partial^2 U_1}{\partial \eta^2}\bigg| +\bigg|\fr{\partial^2 U_3}{\partial \eta^2}\bigg|\leq \dps\int_{0}^\xi\bigg\{|I_1|\cdot\bigg|\fr{\partial y_+}{\partial \eta}\bigg|^2+|I_2|\bigg|\fr{\partial^2 y_+}{\partial \eta^2}\bigg| \bigg\}\bigg|\ {\rm d}\tau \nonumber \\[3pt]
\leq &\dps\int_{0}^\xi\bigg\{\bigg[\fr{1}{2}M\tau+|\lambda| KM\tau(1+M\delta)^2+|\lambda| KM\tau(1+M\delta)^2+K\tau(1+M\delta)^4 \nonumber \\[3pt]
&\qquad +K\tau(1+M\delta)^4\bigg]e^{K\delta^2(1+M\delta)}
+\bigg[\fr{1}{2}M\tau+|\lambda| KM\tau(1+M\delta)+|\lambda| KM\tau(1+M\delta) \nonumber \\[3pt]
&\qquad +K\tau(1+M\delta)^3+K\tau(1+M\delta)^3\bigg]K\delta^2(1+M\delta)^2e^{K\delta^2(1+M\delta)}\bigg\}\ {\rm d}\tau \nonumber \\[3pt]
\leq &\xi^2 e^{K\delta^2(1+M\delta)}\bigg(1+K\delta^2(1+M\delta)^2\bigg)\bigg(\fr{M}{4}+|\lambda| KM(1+M\delta)^2+K(1+M\delta)^4\bigg).
\end{align}
Doing the same procedure for $\pa_{\eta\eta}U_{2}/\xi^2$ and $\pa_{\eta\eta}U_{4}/\xi^2$ yields
\begin{align*}
&\bigg|\fr{\partial^2 U_2}{\partial \eta^2}\bigg| +\bigg|\fr{\partial^2 U_4}{\partial \eta^2}\bigg| \\
\leq &\xi^2 e^{K\delta^2(1+M\delta)}\bigg(1+K\delta^3(1+M\delta)^2\bigg)\bigg(\fr{M}{4}+|\lambda| KM(1+M\delta)^2+K(1+M\delta)^4\bigg).
\end{align*}
Thus we have
\begin{align}\label{3.29}
&\dps\sum^{4}_{j=1}\bigg |\fr{U_{j\eta\eta}(\xi,\eta)}{\xi^2}\bigg | \nonumber \\
\leq &Me^{K\delta^2(1+M\delta)}\bigg(1+K\delta^3(1+M\delta)^2\bigg)\bigg(\fr{1}{2}+|\lambda| K(1+M\delta)^2+\fr{K}{M}(1+M\delta)^4\bigg).
\end{align}

We now choose $M\geq64K\geq64$ and $\lambda_0\leq1/(32K)$ and then let $|\lambda|\leq\lambda_0$ and $\delta\leq\min\{1/M,\delta_0\}$ to get
\begin{align*}
&e^{K\delta^2(1+M\delta)}\bigg(1+K\delta^3(1+M\delta)^2\bigg)\bigg(\fr{1}{2}+|\lambda| K(1+M\delta)^2+\fr{K}{M}(1+M\delta)^4\bigg) \\
\leq & e^{\fr{\delta}{32}}\bigg(1+\fr{\delta^2}{16}\bigg)\bigg(\fr{1}{2}+\fr{1}{8}+\fr{1}{4}\bigg)\leq1.
\end{align*}
Then it follows by \eqref{3.12}, \eqref{3.22} and \eqref{3.29} that $(P_2)$-$(P_4)$ are preserved by the mapping $\mathcal{T}$. To determine $\mathcal{T}(F)\in\mathcal{F}$, it suffices to show by \eqref{3.11a} that $U_{i\xi}(0,\eta)=0, \ (i=1,2,3,4)$. To this end, we differentiate \eqref{3.4} with respect to $\xi$ to achieve
\begin{align}\label{3.30}
\fr{\partial U_1}{\partial \xi}=&\fr{u_1-u_2}{2\xi}+\lambda A_1\fr{u_3-u_4}{\xi}+T_1 \nonumber \\
&+\dps\int_{0}^\xi\bigg\{\fr{u_{1y}-u_{2y}}{2\tau}+\lambda\fr{\pa }{\pa y} \bigg(A_1\fr{u_3-u_4}{\tau}\bigg)+T_{1y}\bigg\}\fr{\partial y_+}{\partial \xi} {\rm d}\tau,
\end{align}
where
\begin{align}\label{3.31}
\fr{\partial y_+}{\partial \xi}(\tau;\xi,\eta)=-\Lambda_+(u_2)\fr{\partial y_+}{\partial \eta}(\tau;\xi,\eta).
\end{align}
From \eqref{3.30}-\eqref{3.31} and the properties $(P_2)$-$(P_4)$, it is easy to know that $U_{1\xi}(0,\eta)=0$.
Similarly, we also have $U_{i\xi}(0,\eta)=0, (i=2,3,4)$, which mean that the map $\mathcal{T}$ does map $\mathcal{F}$ into itself.

Next we check that \eqref{3.6} holds for some positive constant $\kappa<1$. By \eqref{3.3} we have
\begin{align*}
\fr{{\rm d}}{{\rm d}_+(u_2)}U_1+\fr{{\rm d}}{{\rm d}_+(u_2)}U_3=&\fr{u_1-u_2}{2\tau}+\fr{u_3-u_4}{2\tau}+\lambda A_1(\tau,y,u_2)\fr{u_3-u_4}{\tau} \\
&+\lambda A_3(\tau,y,u_2)\fr{u_1-u_2}{\tau}+T_1(\tau,y,\textbf{u})+T_3(\tau,y,\textbf{u}),  \\
\fr{{\rm d}}{{\rm d}_+(\hat{u}_2)}\hat{U}_1+\fr{{\rm d}}{{\rm d}_+(\hat{u}_2)}\hat{U}_3 =&\fr{\hat{u}_1-\hat{u}_2}{2\tau}+\fr{\hat{u}_3-\hat{u}_4}{2\tau}+\lambda A_1(\tau,y,\hat{u}_2)\fr{\hat{u}_3-\hat{u}_4}{\tau}\\
&+\lambda A_3(\tau,y,\hat{u}_2)\fr{\hat{u}_1-\hat{u}_2}{\tau}+T_1(\tau,y,\hat{\textbf{u}})+T_3(\tau,y,\hat{\textbf{u}}),
\end{align*}
from which and \eqref{3.2} we find that
\begin{align}\label{3.32}
\fr{d}{d_+(u_2)}(U_1-\hat{U}_1)+\fr{d}{d_+(u_2)}(U_3-\hat{U}_3)
=I_3+I_4+I_5+I_6+I_7+I_8,
\end{align}
where
\begin{align*}
I_3&=\fr{(u_1-\hat{u}_1)-(u_2-\hat{u}_2)}{2\tau}+\fr{(u_3-\hat{u}_3)-(u_4-\hat{u}_4)}{2\tau}, \\
I_4&=\lambda A_1(\tau,y,u_2)\fr{u_3-u_4}{\tau}-\lambda A_1(\tau,y,\hat{u}_2)\fr{\hat{u}_3-\hat{u}_4}{\tau}, \\
I_5&=\lambda A_3(\tau,y,u_2)\fr{u_1-u_2}{\tau}-\lambda A_3(\tau,y,\hat{u}_2)\fr{\hat{u}_1-\hat{u}_2}{\tau}, \\
I_6&=T_1(\tau,y,\textbf{u})-T_1(\tau,y,\hat{\textbf{u}}), \\
I_7&=T_3(\tau,y,\textbf{u})-T_3(\tau,y,\hat{\textbf{u}}), \\
I_8&=[\Lambda_+(\hat{u}_2)-\Lambda_+(u_2)](\hat{U}_{1y}+\hat{U}_{3y})
\end{align*}
It is clear that
\begin{align}\label{3.33}
|I_3|\leq \fr{|u_1-\hat{u}_1|+|u_2-\hat{u}_2|+|u_3-\hat{u}_3|+|u_4-\hat{u}_4|}{2\tau}\leq \fr{\tau}{2}d(\textbf{u},\hat{\textbf{u}}),
\end{align}
and
\begin{align}\label{3.34}
|I_8|&\leq|\Lambda_+(\hat{u}_2)-\Lambda_+(u_2)|\cdot(|\hat{U}_{1y}|+|\hat{U}_{3y}|) \nonumber \\
&\leq|\Lambda_{+u_2}|\cdot|\hat{u}_2-u_2|\cdot(|\hat{U}_{1y}|+|\hat{U}_{3y}|) \leq KM\tau^5d(\textbf{u},\hat{\textbf{u}}).
\end{align}
For the term $I_4$, we have
\begin{align}\label{3.35}
|I_2|=&|\lambda|\cdot\bigg| A_1(\tau,y,u_2)\fr{u_3-u_4}{\tau}- A_1(\tau,y,\hat{u}_2)\fr{\hat{u}_3-\hat{u}_4}{\tau}\bigg| \nonumber \\
\leq &|\lambda|\cdot |A_{1u_2}(u_2-\hat{u}_2)|\cdot\bigg|\fr{u_3-u_4}{\tau}\bigg|
+|\lambda|\cdot|A_1(\tau,y,\hat{u}_2)|\cdot\bigg|\fr{u_3-\hat{u}_3+\hat{u}_4-u_4}{\tau}\bigg| \nonumber \\
\leq &|\lambda| KM\tau^3d(\textbf{u},\hat{\textbf{u}})+|\lambda| K\tau d(\textbf{u},\hat{\textbf{u}})\leq |\lambda| K\tau(1+M\delta)d(\textbf{u},\hat{\textbf{u}}).
\end{align}
Similarly, one has
\begin{align}\label{3.36}
|I_5|\leq |\lambda| K\tau(1+M\delta)d(\textbf{u},\hat{\textbf{u}}).
\end{align}
We next estimate the terms $I_6$ and $I_7$. It is obvious that
\begin{align*}
|T_{iu_j}|\leq K(1+M\delta)^2,\ (i=1,2;\ j=1,2,3,4),
\end{align*}
from which we obtain
\begin{align}\label{3.38}
|I_4|
\leq &|T_{1u_1}(u_1-\hat{u}_1)|+|T_{1u_2}(u_2-\hat{u}_2)|+|T_{1u_3}(u_3-\hat{u}_3)| +|T_{1u_4}(u_4-\hat{u}_4)|\nonumber\\
\leq &K(1+M\delta)^2\tau^2d(\textbf{u},\hat{\textbf{u}}),
\end{align}
and
\begin{align}\label{3.39}
|I_5|
\leq &|T_{2u_1}(u_1-\hat{u}_1)|+|T_{2u_2}(u_2-\hat{u}_2)|+|T_{2u_3}(u_3-\hat{u}_3)| +|T_{2u_4}(u_4-\hat{u}_4)|\nonumber\\
\leq &K(1+M\delta)^2\tau^2d(\textbf{u},\hat{\textbf{u}}).
\end{align}
Combining \eqref{3.32}-\eqref{3.39} gets
\begin{align}\label{3.40}
|U_1-\hat{U}_1|+|U_3-\hat{U}_3|\leq &\dps \int_{0}^\tau\sum_{i=3}^8|I_i|{\rm d}\tau   \nonumber \\
\leq &\dps\int_{0}^\tau\tau\bigg\{\fr{1}{2}+|\lambda| K(1+M\delta)+|\lambda| K(1+M\delta)+K(1+M\delta)^2\delta \nonumber \\
&\qquad \quad +K(1+M\delta)^2\delta+KM\delta^4\bigg\}d(\textbf{u},\hat{\textbf{u}}){\rm d}\tau \nonumber \\
\leq &\tau^2\bigg\{\fr{1}{2}+|\lambda| K(1+M\delta)+K(1+M\delta)^2\delta+KM\delta^4\bigg\}d(\textbf{u},\hat{\textbf{u}}).
\end{align}
In view of the same argument as above, one acquires
\begin{align}\label{3.41}
|U_2-\hat{U}_2|+|U_4-\hat{U}_4|\leq\fr{1}{2}\tau^2\big\{\fr{1}{2}+|\lambda| K(1+M\delta)+K(1+M\delta)^2\delta+KM\delta^4\big\}d(\textbf{u},\hat{\textbf{u}}).
\end{align}
Adding \eqref{3.40} and \eqref{3.41} leads to
\begin{align*}
\dps\sum_{j=1}^4\fr{|U_i-\hat{U}_i|}{\tau^2}\leq\bigg\{\fr{1}{2}+|\lambda| K(1+M\delta)+K(1+M\delta)^2\delta+KM\delta^4\bigg\}d(\textbf{u},\hat{\textbf{u}}) :=\kappa d(\textbf{u},\hat{\textbf{u}}).
\end{align*}
for $\kappa<1$ if $|\lambda|\leq\lambda_0$ and $\delta$ is chosen as before, which finishes the proof of \eqref{3.6}. Hence $\mathcal{T}$ is a contraction under the metric $d$, and the proof of the lemma is complete.
\end{proof}

\textbf{Step 3: Properties of the limit function.} We assert that the limit of the iteration sequence $\{\textbf{u}^{(n)}\}$, defined by $\textbf{u}^{(n)}=\mathcal{T}\textbf{u}^{(n-1)}$, is also in $\mathcal{F}$.
This assertion is delivered directly by the following lemma.
\begin{lem}\label{lem2}
Let the assumptions in Theorem \ref{thm3} hold. Then, for $|\lambda|\leq\lambda_0$, the iteration sequence $\{\textbf{u}^{(n)}\}$ satisfies that $\{\partial_\tau \textbf{u}^{(n)}(\tau,y)\}$ and $\{\partial_y \textbf{u}^{(n)}(\tau,y)\}$
are uniformly Lipschitz continuous on $[0,\delta]\times R$.
\end{lem}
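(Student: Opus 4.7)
The plan is to extract uniform Lipschitz bounds from the system \eqref{3.3} by exploiting the fact that Lemma \ref{lem1} forces each iterate $\textbf{u}^{(n)}$ to lie in $\mathcal{F}(M,\delta)$ with the same constants $M$ and $\delta$. Thus $u^{(n)}_j/\tau^2$, $u^{(n)}_{jy}/\tau^2$ and $u^{(n)}_{jyy}/\tau^2$ are bounded by $M$ uniformly in $n$, and the task reduces to controlling the first partial derivatives of $\partial_\tau \textbf{u}^{(n)}$ and $\partial_y \textbf{u}^{(n)}$ in both independent variables.

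First, I would solve the linear system \eqref{3.3} (with $\textbf{u}=\textbf{u}^{(n-1)}$ and $\textbf{U}=\textbf{u}^{(n)}$) algebraically for $\partial_\tau u^{(n)}_i$, obtaining an identity of the form
\begin{align*}
\partial_\tau u^{(n)}_i = -\Lambda_{\pm}(u^{(n-1)}_\ast)\,\partial_y u^{(n)}_i + \mathrm{RHS}_i(\tau,y,\textbf{u}^{(n-1)}).
\end{align*}
Each forcing term is either of the shape $(u^{(n-1)}_k - u^{(n-1)}_\ell)/\tau$, controlled by $M\tau$ via $(P_2)$, or one of the $T_i$, $F_i$ contributions already estimated as $O(\tau)$ in \eqref{3.10}--\eqref{3.11}; the transport piece satisfies $|\Lambda_\pm \partial_y u^{(n)}_i| \le K\tau\cdot M\tau^2$. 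This yields $|\partial_\tau u^{(n)}_i(\tau,y)| \le K\tau$, uniformly in $n$.

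Next, I would establish the Lipschitz continuity of $\partial_y \textbf{u}^{(n)}$. The $y$-direction is immediate from $(P_4)$, since $|u^{(n)}_{jyy}| \le M\delta^2$ uniformly in $n$. For the $\tau$-direction, differentiating \eqref{3.3} in $y$ yields
\begin{align*}
\partial_\tau(\partial_y u^{(n)}_i) = \partial_y \mathrm{RHS}_i - (\partial_y \Lambda_\pm)\,\partial_y u^{(n)}_i - \Lambda_\pm\,\partial_{yy} u^{(n)}_i,
\end{align*}
and each term is uniformly bounded by essentially the same computations used in Lemma \ref{lem1} to derive \eqref{3.17} and \eqref{3.19}. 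Equality of mixed partials then also delivers the $y$-Lipschitz bound on $\partial_\tau u^{(n)}_i$.

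The delicate step, and the main obstacle, is the $\tau$-Lipschitz bound on $\partial_\tau u^{(n)}_i$, i.e.\ a uniform bound on $\partial_{\tau\tau} u^{(n)}_i$. Differentiating the identity of the second paragraph once more in $\tau$ gives
\begin{align*}
\partial_{\tau\tau} u^{(n)}_i = \partial_\tau \mathrm{RHS}_i - (\partial_\tau \Lambda_\pm)\,\partial_y u^{(n)}_i - \Lambda_\pm\,\partial_{\tau y} u^{(n)}_i,
\end{align*}
where the last two pieces are bounded by the previous paragraph. The sensitive quantity in $\partial_\tau \mathrm{RHS}_i$ is
\begin{align*}
\partial_\tau\bigg[\fr{u^{(n-1)}_k - u^{(n-1)}_\ell}{\tau}\bigg] = \fr{u^{(n-1)}_{k\tau} - u^{(n-1)}_{\ell\tau}}{\tau} - \fr{u^{(n-1)}_k - u^{(n-1)}_\ell}{\tau^2}.
\end{align*}
The second summand is bounded by $M$ via $(P_2)$, and the first requires $|u^{(n-1)}_{j\tau}/\tau| \le K$, which is exactly the bound of the second paragraph applied to the previous iterate; the other $T_i$, $F_i$ contributions are handled by the same token. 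The scheme closes in $n$ because the vanishing conditions $u^{(n-1)}_j(0,y) = u^{(n-1)}_{j\tau}(0,y) = 0$ are preserved along the iteration through $(P_1)$ and Lemma \ref{lem1}, so that the $1/\tau$ singularities produced by successive differentiation are absorbed by the extra vanishing of the numerators, and the resulting Lipschitz constants depend only on $M, \delta, \lambda_0$ and the data, but not on $n$.
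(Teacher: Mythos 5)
Your argument is correct in substance and rests on the same two key mechanisms as the paper's proof: the $\mathcal{F}$-invariance established in Lemma \ref{lem1} provides uniform-in-$n$ bounds on $u^{(n)}_j/\tau^2$, $u^{(n)}_{jy}/\tau^2$, $u^{(n)}_{jyy}/\tau^2$, and the vanishing conditions $(P_1)$ absorb the $1/\tau$ singularities created by each differentiation. You also use the same bootstrap structure as the paper: a first-derivative bound in $\tau$ on the earlier iterate (your $|u^{(n-1)}_{j\tau}|\le K\tau$) feeds into the second-derivative bound on the current one.

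The technical route is different, though. The paper never differentiates the PDE \eqref{3.3}; it differentiates the integral representation \eqref{3.4} along characteristics, producing explicit closed-form expressions for $\partial_\xi U_i$, $\partial_{\xi\eta}U_i$ and $\partial_{\xi\xi}U_i$ (equations \eqref{3.30}, \eqref{3.44}, \eqref{3.48}) and then estimates these directly, yielding the quantitative bounds $\sum_i|U_{i\xi}|\le 2M\xi$, $\sum_i|U_{i\xi\eta}|\le 2M\xi$, $\sum_i|U_{i\xi\xi}|\le 16M$. You instead treat the PDE as the primary identity, solve algebraically for $\partial_\tau u^{(n)}_i$, and differentiate that relation. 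Both are legitimate, and the estimates of $T_i$, $T_{iy}$, $\Lambda_\pm$, $\partial_y\Lambda_\pm$, etc.\ that you need are exactly those already prepared in Step 2. What the paper's route buys is that existence and continuity of the mixed partials $\partial_{\xi\eta}U_i$ (and hence the legitimacy of swapping the order of differentiation) come for free from the integral formula, whereas your route invokes equality of mixed partials and differentiability of the PDE \eqref{3.3} as a classical identity, facts that ultimately also rest on the integral representation; if you were to write this out fully you would want to make that dependence explicit. Your route, in exchange, is arguably shorter and makes the role of $(P_1)$ in killing the $1/\tau$ blow-up more transparent, at the price of somewhat coarser constants in the final Lipschitz bound.

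One small point worth spelling out: the bootstrap for $|u^{(n-1)}_{j\tau}|\le K\tau$ at $n=1$ needs a starting iterate with this property; taking $\textbf{u}^{(0)}\equiv 0$ (which lies in $\mathcal{F}$) suffices and makes the induction run uniformly in $n$.
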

\begin{proof}
Assume that $\textbf{u}=(u_1, u_2, u_3, u_4)^T\in\mathcal{F}$. Thanks to Lemma \ref{lem1} we know that $\textbf{U}=(U_1, U_2, U_3, U_4)^T=\mathcal{T}(\textbf{u})$ also in $\mathcal{F}$. We next derive, in turn, the estimates of the terms
$$
\dps\sum_{i=1}^4\bigg|\fr{\pa U_i}{\pa\xi}(\xi,\eta)\bigg|,\quad \sum_{i=1}^4\bigg|\fr{\pa^2 U_i}{\pa\xi\pa\eta}(\xi,\eta)\bigg|, \quad {\rm and}\quad  \sum_{i=1}^4\bigg|\fr{\pa^2 U_i}{\pa\xi^2}(\xi,\eta)\bigg|.
$$

Firstly, we use the estimates in Step 2 to obtain
\begin{align}\label{3.42}
\bigg|\fr{\partial U_1}{\partial \xi}\bigg|+\bigg|\fr{\partial U_3}{\partial \xi}\bigg| \leq &\bigg|\fr{u_{1}-u_2}{2\xi}\bigg|+\bigg|\fr{u_3-u_4}{2\xi}\bigg|
+\bigg|\lambda A_1\fr{u_3-u_4}{\xi}\bigg|+\bigg|\lambda A_3\fr{u_1-u_2}{\xi}\bigg|+|T_1|+|T_3| \nonumber \\[3pt]
&+\dps\int_{0}^\xi\bigg\{\bigg|\fr{u_{1y}-u_{2y}}{2\tau}\bigg| +\bigg|\fr{u_{3y}-u_{4y}}{2\tau}\bigg|+\bigg|\lambda\fr{\pa}{\pa y}\bigg(A_1\fr{u_3-u_4}{\tau}\bigg)\bigg| \nonumber \\[3pt]
&\qquad \qquad +\bigg|\lambda\fr{\pa}{\pa y}\bigg(A_3\fr{u_1-u_2}{\tau}\bigg)\bigg|+|T_{1y}|+|T_{3y}|\bigg\}\bigg|\fr{\partial y_+}{\partial \xi}\bigg| {\rm d}\tau  \nonumber \\[3pt]
\leq &\fr{1}{2}M\xi+\lambda_0 KM\xi+K(1+M\delta)^2\xi \nonumber \\[3pt]
&\  +\dps\int_{0}^\xi\bigg\{\fr{1}{2}M\tau+\lambda_0 KM(1+M\delta)\tau+K(1+M\delta)^3\tau\bigg\}K\tau e^{K\delta^2(1+M\delta)}{\rm d}\tau\nonumber \\[3pt]
\leq &\bigg\{\fr{M}{2}+\lambda_0 KM+K(1+M\delta)^2  \nonumber \\[3pt]
&+K \delta^2\bigg(M+\lambda_0 KM(1+M\delta)+K(1+M\delta)^3\bigg)e^{K\delta^2(1+M\delta)}\bigg\}\xi \nonumber \\[3pt]
\leq & M\xi,
\end{align}
if $M$, $\lambda_0$ and $\delta$ are chosen as in Lemma \ref{lem1}. The above is also valid for $|U_{2\xi}|+|U_{4\xi}|$. Hence we get the estimate of the term $\sum_{i=1}^4|U_{i\xi}|$
\begin{align}\label{3.43}
\dps\sum_{i=1}^4\bigg|\fr{\partial U_i}{\partial\xi}(\xi,\eta)\bigg|\leq 2M\xi.
\end{align}

Secondly, we calculate by differentiating \eqref{3.13} with respect to $\xi$
\begin{align}\label{3.44}
\fr{\partial}{\partial\xi}\bigg(\fr{\partial U_1}{\partial \eta} (\xi,\eta)\bigg)+\fr{\partial}{\partial\xi}\bigg(\fr{\partial U_3}{\partial \eta}(\xi,\eta)\bigg)=I_{9} +\dps\int_{0}^\xi\bigg\{I_{10}\fr{\partial y_+}{\partial\eta}\cdot\fr{\partial y_+}{\partial\xi} +I_{11}\fr{\partial^2 y_+}{\partial \eta\partial\xi}\bigg\}\ {\rm d}\tau,
\end{align}
where
\begin{align*}
I_9=&\fr{u_{1\eta}-u_{2\eta}}{2\xi}+\fr{u_{3\eta}-u_{4\eta}}{2\xi} +\lambda\fr{\pa}{\pa \eta}\bigg(A_1\fr{u_3-u_4}{\xi} +A_3\fr{u_1-u_2}{\xi}\bigg)+T_{1\eta}+T_{3\eta}, \\[3pt]
I_{10}=&\fr{u_{1yy}-u_{2yy}}{2\tau}+\fr{u_{3yy}-u_{4yy}}{2\tau}
+\lambda\fr{\pa^2}{\pa y^2}\bigg(A_1\fr{u_3-u_4}{\tau}+A_3\fr{u_1-u_2}{\tau}\bigg)
+T_{1yy}+T_{3yy}, \\[3pt]
I_{11}=&\fr{u_{1y}-u_{2y}}{2\tau}
+\fr{u_{3y}-u_{4y}}{2\tau}+\lambda \fr{\pa}{\pa y}\bigg(A_1\fr{u_3-u_4}{\tau}+A_3\fr{u_1-u_2}{\tau}\bigg)
+T_{1y}+T_{3y}
\end{align*}
and
\begin{align*}
\fr{\partial^2y_+}{\partial\xi\partial\eta}(\tau;\xi,\eta) =\exp\bigg(\dps\int_{\xi}^\tau\fr{\partial\Lambda_+(u_2)}{\partial y}\bigg){\rm d}s\cdot
\bigg\{\dps\int_{\xi}^\tau\fr{\partial^2\Lambda_+(u_2)}{\partial y^2}\fr{\partial y_+}{\partial\xi}{\rm d}s-\fr{\partial\Lambda_+(u_2)}{\partial y}\bigg\}.
\end{align*}
It follows by combining \eqref{3.19}, \eqref{3.26} and using the relation \eqref{3.31} that
\begin{align}\label{3.45}
\big|\fr{\partial^2y_+}{\partial\xi\partial\eta}\big|\leq &e^{K\delta^2(1+M\delta)}\bigg\{\dps\int_{0}^\delta Ks(1+M\delta)^2se^{K\delta^2(1+M\delta)}{\rm d}s+K\delta(1+M\delta)\bigg\} \nonumber \\
 \leq &e^{K\delta^2(1+M\delta)}K\delta(1+M\delta)^2.
\end{align}
One employs the estimates in Step 2 again to achieve
\begin{align*}
|I_9|\leq &\fr{1}{2}M\xi+\lambda_0KM\xi(1+M\delta)+K\xi(1+M\delta)^3, \\
|I_{10}|\leq &\fr{1}{2}M\tau+\lambda_0 KM\tau(1+M\delta)^2+K\tau(1+M\delta)^4, \\
|I_{11}|\leq & \fr{1}{2}M\tau+\lambda_0 KM\tau(1+M\delta) +K\tau(1+M\delta)^3.
\end{align*}
Inserting the above and \eqref{3.45} into \eqref{3.44} yields
\begin{align}\label{3.46}
&\bigg|\fr{\partial^2 U_1}{\partial\eta\partial\xi}\bigg|+\bigg|\fr{\partial^2 U_3}{\partial \eta\partial\xi}\bigg|\leq\fr{1}{2}M\xi+\lambda_0 KM\xi(1+M\delta)+K\xi(1+M\delta)^3 \nonumber \\[3pt]
&\quad +\dps\int_{0}^\xi\bigg\{\bigg(\fr{1}{2}M\tau+\lambda_0 KM\tau(1+M\delta)^2+K\tau(1+M\delta)^4\bigg)e^{K\delta(1+M\delta)}K\tau \nonumber \\[3pt]
&\quad +\bigg(\fr{1}{2}M\tau+\lambda_0 KM\tau(1+M\delta)+K\tau(1+M\delta)^3\bigg)K\delta(1+M\delta)^2e^{K\delta(1+M\delta)}\bigg\}{\rm d}\tau
\nonumber \\[3pt]
\leq&\bigg\{\fr{1}{2}M+\lambda_0 KM(1+M\delta)+K(1+M\delta)^3  \nonumber \\[3pt]
&\ \ +K\delta^2\bigg(M+\lambda KM(1+M\delta)^2+K(1+M\delta)^4\bigg) e^{K\delta(1+M\delta)}\bigg\}\xi\leq M\xi,
\end{align}
by the chosen of $M$, $\lambda_0$ and $\delta$ as in Step 2. The same estimate of \eqref{3.46} is also true for $|U_{2\xi\eta}+U_{4\xi\eta}|$. Therefore we have
\begin{align}\label{3.47}
\dps\sum_{i=1}^4\bigg|\fr{\partial^2 U_i}{\partial\xi\partial\eta}(\xi,\eta)\bigg|\leq 2M\xi.
\end{align}

Finally, we derive the bound of $\sum_{i=1}^4|U_{i\xi\xi}|$. By differentiating \eqref{3.42} with respect to $\xi$, one obtains
\begin{align}\label{3.48}
\fr{\partial}{\partial\xi}\bigg(\fr{\partial U_1}{\partial \xi}(\xi,\eta)\bigg)+\fr{\partial}{\partial\xi}\bigg(\fr{\partial U_3}{\partial \xi}(\xi,\eta)\bigg)= I_{12} +\dps\int_{0}^\xi\bigg\{ I_{13}\bigg(\fr{\partial y_+}{\partial\xi}\bigg)^2 +I_{14}\fr{\partial^2 y_+}{\partial\xi^2} \bigg\}\ {\rm d}\tau,
\end{align}
where
\begin{align*}
I_{12}=&\fr{u_{1\xi}-u_{2\xi}}{\xi}+\fr{u_{3\xi}-u_{4\xi}}{\xi}-\fr{u_1-u_2}{2\xi^2}-\fr{u_3-u_4}{2\xi^2} \\[3pt] &+2\lambda\fr{\pa}{\pa \xi}\bigg(A_1\fr{u_3-u_4}{\xi} +A_3\fr{u_1-u_2}{\xi}\bigg)
+2T_{1\xi}+2T_{3\xi}, \\[3pt]
I_{13}=& \fr{u_{1yy}-u_{2yy}}{2\tau}+\fr{u_{3yy}-u_{4yy}}{2\tau} +\lambda\fr{\pa^2}{\pa y^2}\bigg(A_1\fr{u_3-u_4}{\tau} +A_3\fr{u_1-u_2}{\tau}\bigg)+T_{1yy}+T_{3yy}, \\[3pt]
I_{14}=& \fr{u_{1y}-u_{2y}}{2\tau}
+\fr{u_{3y}-u_{4y}}{2\tau}+\lambda\fr{\pa}{\pa y}\bigg(A_1\fr{u_3-u_4}{\tau} +A_3\fr{u_1-u_2}{\tau}\bigg) +T_{1y}+T_{3y}.
\end{align*}
By a direct calculation, we acquire
\begin{align}\label{3.50}
&\bigg|\partial_\xi\bigg(A_1(\xi,\eta,u_2,u)\fr{u_3-u_4}{\xi}\bigg)\bigg| \leq |\pa_\xi A_{1}|\fr{|u_3-u_4|}{\xi}+|A_1|\bigg(\fr{|u_{3\xi}-u_{4\xi}|}{\xi} +\fr{|u_3-u_4|}{\xi^2}\bigg)  \nonumber   \\[3pt]
\leq &\bigg(|A_{1\xi}|+|A_{1u_2}||u_{2\xi}|+|A_{1u}||u_{\xi}|\bigg)\fr{|u_3-u_4|}{\xi}  +|A_1|\bigg(\fr{|u_{3\xi}-u_{4\xi}|}{\xi} + \fr{|u_3-u_4|}{\xi^2}\bigg) \nonumber
\\[3pt]
\leq&[K+KM\xi+K(1+M\xi)]M\xi+KM+KM\leq K(1+M\delta)M\delta+KM.
\end{align}
Similarly, one arrives at
\begin{align}\label{3.51}
\bigg|\partial_\xi\bigg(A_3(\xi,\eta,u_2,u)\fr{u_1-u_2}{\xi}\bigg)\bigg|\leq K(1+M\delta)M\delta+KM.
\end{align}
Moreover, we can derive the following estimates
\begin{align*}
\bigg|\partial_\xi\bigg(\fr{1}{4(u_2+g)}\fr{(u_1-u_2)^2}{\xi}\bigg)\bigg|\leq &K(1+M\delta)M^2\xi^3+KM^2\xi^2,  \\
\dps\sum^{4}_{j=1}|(T_{1j}u_{j})_\xi|\leq&\dps\sum^{4}_{j=1}|T_{1j\xi}||u_j|+\dps\sum^{4}_{j=1}|T_{1j}||u_{j\xi}|  \\
\leq &K(1+M\delta)M\xi^2+K(1+M\delta)M\xi^2+K(1+M\delta)M\xi^2  \\
&\ \ +K(1+M\delta)^2M\xi^2+K(1+M\delta)M\xi \\
\leq &KM\delta(1+M\delta)^3, \\
|(F_{1\xi})_\xi|\leq& K(1+M\delta)^2\delta+K(1+M\delta),
\end{align*}
from which one has
\begin{align}\label{3.53}
|T_{1\xi}|\leq&\bigg|\partial_\xi\bigg(\fr{1}{4(u_2+g)}\fr{(u_1-u_2)^2}{\xi}\bigg)\bigg| +\dps\sum^{4}_{j=1}|T_{1j\xi}||u_j|+\dps\sum^{4}_{j=1}|T_{1j}||u_{j\xi}|+|(F_{1\xi})_{\xi}|\nonumber \\
\leq &K\delta(1+M\delta)^4+K(1+M\delta).
\end{align}
By the same argument for $T_3$ as above we obtain
\begin{align}\label{3.54}
|T_{3\xi}|\leq K\delta(1+M\delta)^4+K(1+M\delta).
\end{align}
Combining \eqref{3.50}-\eqref{3.54} and using the estimates in Step 2 lead to
\begin{align*}
|I_{12}|\leq &3M+\lambda_0 [K(1+M\delta)M\delta+KM]+KM\delta(1+M\delta)^4+K(1+M\delta),\\
|I_{13}|\leq &\fr{1}{2}M\tau+\lambda_0 K\tau(1+M\delta)^2M+K\tau(1+M\delta)^4,\\
|I_{14}|\leq &\fr{1}{2}M\tau+\lambda_0 K\tau(1+M\delta)M+K(1+M\delta)^3.
\end{align*}
Putting the above into \eqref{3.48} and applying the estimate
\begin{align}
\bigg|\fr{\partial^2y_+}{\partial\xi^2}\bigg|=&\bigg|\fr{\partial\Lambda_+(u_2)}{\partial \xi}\cdot\fr{\partial y_+}{\partial \eta}+\Lambda_+(u_2)\fr{\partial^2y_+}{\partial\xi\partial\eta}\bigg| \nonumber \\[3pt]
\leq &K[1+M\delta^2+\delta(1+M\delta)]e^{K\delta^2(1+M\delta)}+K\delta\cdot K\delta(1+M\delta)^2e^{K\delta^2(1+M\delta)} \nonumber \\
\leq &K(1+M\delta)^3e^{K\delta^2(1+M\delta)}, \nonumber
\end{align}
one gets
\begin{align}\label{3.56}
&\bigg|\fr{\partial^2U_1}{\partial\xi^2}\bigg|+\bigg|\fr{\partial^2U_3}{\partial\xi^2}\bigg|\leq3M+\lambda_0 [K(1+M\delta)M\delta+KM]+KM\delta(1+M\delta)^4+K(1+M\delta) \nonumber \\
&\ \ +e^{K\delta^2(1+M\delta)}K(1+M\delta)^3\delta^2\bigg(M+\lambda KM(1+M\delta)^2+K(1+M\delta)^4\bigg) \nonumber \\
\leq &8M,
\end{align}
by the chosen of $M$, $\lambda_0$ and $\delta$ as in Lemma \ref{lem1}. The estimate in \eqref{3.56} also holds for
$|U_{2\xi\xi}|+|U_{4\xi\xi}|$. Thus we have
\begin{align}\label{3.57}
\dps\sum_{i=1}^4\bigg|\fr{\partial^2U_i}{\partial\xi^2}\bigg|\leq 16M.
\end{align}
Summing up \eqref{3.43}, \eqref{3.47} and \eqref{3.57}, we finish the proof of Lemma \ref{lem2}.
\end{proof}

Based on Lemmas \ref{lem1} and \ref{lem2}, the proof of Theorem \ref{thm3} is now completed.

\textbf{Proof of Theorems 1.1 and 1.2.} According to \eqref{2.8} and Theorem \ref{thm3}, we first obtain the functions $(R_i,S_i)(y,\tau)\ (i=1,2)$. Moreover, recalling the Jacobian
\begin{align*}
J:=\fr{\partial(\tau,y)}{\partial(t,x)}=-c'\fr{R_1+S_1}{2}=-c'\bigg(\fr{U_1+U_2+2g_{11}\tau}{2}+\psi_1\bigg)\geq \fr{m_0\psi_0}{2}>0,
\end{align*}
we see that the transformation $(x, t)\rightarrow(y, \tau)$ is a one-to-one mapping. Therefore one can acquire $(R_i,S_i)\ (i=1,2)$ as smooth functions of $t$ and $x$. We integrate the equations for $u$ and $v$ in \eqref{1.12} to get the functions $u(t,x)$ and $v(t,x)$, respectively. It is not difficult to check that the functions $(R_1, S_1, R_2, S_2, u, v)$ defined above satisfy system \eqref{1.12} and the initial conditions \eqref{1.13}, which
ends the proof of Theorem \ref{thm1}. To show Theorem \ref{thm2}, is suffices to verify that $u_x=(R_1-S_1)/2c$ and $v_x=(R_2-S_2)/2c$ hold in $[0,\delta]\times R$. Denote
$$
H_1=R_1-S_1-2cu_x,\quad H_2=R_2-S_2-2cv_x.
$$
It is easily seen that $H_1(x,0)=H_2(x,0)=0$ by \eqref{1.13}. Furthermore, we directly calculate by \eqref{1.12} to find that
\begin{align}\label{3.58}
\partial_tH_i=\fr{c'(R_1+S_1)}{2}\cdot\fr{H_i}{c},\ (i=1,2).
\end{align}
Making use of \eqref{2.3}, the equations \eqref{3.58} can be rewritten as
\begin{align*}
\partial_\tau H_i=\fr{H_i}{\tau},\ (i=1,2).
\end{align*}
We note that $H_i/\tau=0,\ (i=1,2)$ on the line $\tau=0$, which implies $H_1(y,\tau)=H_2(y,\tau)\equiv0$ in $(0,\delta]\times R$. Thus we have $H_1(x,t)=H_2(x,t)\equiv0$ and the proof of Theorem \ref{thm2} is complete.

\section*{Acknowledgements}

This work was supported by the Zhejiang Provincial Natural Science Foundation (No. LY17A010019).

\section*{Appendix}
\appendix

\section{The expressions of $T_{ij}$ and $F_i$ in \eqref{2.11}}\label{app}

We here list the detailed expressions of $T_{ij}$ and $F_i$ in \eqref{2.11}.
\begin{align*}
T_{11}&=0,\quad \qquad  \qquad  \qquad  \qquad \
T_{12}=\fr{aa'\psi_2^2+2\lambda aa'\varphi'_2}{c'(\varphi_1)\psi_1(y)(U_2+g)}, \\
T_{13}&=\fr{c'a^2\varphi'_2-aa'\psi_2}{c'(U_2+g)},\quad \qquad \
T_{14}=-\fr{c'a^2\varphi'_2+aa'\psi_2+ aa'U_3}{c'(U_2+g)},  \\
T_{21}&=\fr{aa'\psi^2_2+2\lambda aa'\varphi'_2}{c'(\varphi_1)\psi_1(y)(U_1+g)},\quad
T_{22}=0,\\
T_{23}&=\fr{a^2c'\varphi'_2-aa'\psi_2}{c'(U_1+g)},\quad \qquad \
T_{24}=-\fr{c'a^2\varphi'_2+aa'\psi_2+ aa'U_3}{c'(U_1+g)},
\end{align*}
\begin{align*}
T_{31}&=\fr{a'\psi_2-ac'\varphi'_2}{ac'(U_2+g)},\quad
T_{32}=\fr{a'\psi_2+ac'\varphi'_2}{ac'(U_2+g)}
-\fr{2a'\psi_1\psi_2}{a(\varphi_1)c'(\varphi_1)\psi_1(y)(U_2+g)},\\
T_{33}&=\fr{a'(U_2+\psi_1)}{ac'(U_2+g)},\quad
\ T_{34}=\fr{a'(U_1+ \psi_1)}{ac'(U_2+g)}, \\
T_{41}&=\fr{a'\psi_2-ac'\varphi'_2}{ac'(U_1+g)} -\fr{2a'\psi_1\psi_2}{a(\varphi_1)c'(\varphi_1)\psi_1(y)(U_1+g)},\quad
T_{42}=\fr{a'\psi_2+ac'\varphi'_2}{ac'(U_1+g)}, \\
T_{43}&=\fr{a'(U_2+\psi_1)}{ac'(U_1+g)},\quad
T_{44}=\fr{a'(U_1+\psi_1)}{ac'(U_1+g)},
\end{align*}

\begin{align*}
F_1=&-\fr{c'a^2(\varphi'_2)^2+aa'\psi_2(g_{21}+g_{22})}{c'(U_2+g)}
+\fr{\psi'_1+g'_{11}\tau- aa'g_{21}g_{22}\tau}{c'(U_2+g)} \\ \
&-\fr{aa'(g_{22}U_3+g_{21}U_4)}{c'(U_2+g)}
+\fr{(aa'\psi^2_2+2\lambda aa'\varphi'_2)g_{11}}{c'(\varphi_1)\psi_1(y)(U_2+g)}  \\
&\ +\fr{aa'\psi^2_2+2\lambda aa'\varphi'_2}{c'(\varphi_1)c'(U_2+g)}\fr{c'(u)-c'(\varphi_1)}{\tau}
+\fr{2\lambda\varphi'_2+\psi^2_2}{2c'(\varphi_1)\psi_1(y)}\fr{(a^2)'(\varphi_1)-(a^2)'(u)}{\tau},
\end{align*}
\begin{align*}
F_2=&-\fr{c'a^2(\varphi'_2)^2+aa'\psi_2(g_{21}+g_{22})}{c'(U_1+g)}
+\fr{\psi'_1+g'_{11}\tau- aa'g_{21}g_{22}\tau}{c'(U_1+g)} \\
&\ -\fr{aa'(g_{22}U_3+g_{21}U_4)}{c'(U_1+g)} +\fr{(aa'\psi^2_2+2\lambda aa'\varphi'_2)g_{11}}{c'(\varphi_1)\psi_1(y)(U_1+g)} \\
&\ +\fr{aa'\psi^2_2+2\lambda aa'\varphi'_2}{c'(\varphi_1)c'(U_1+g)}\fr{c'(u)-c'(\varphi_1)}{\tau}
+\fr{2\lambda\varphi'_2+\psi^2_2}{2c'(\varphi_1)\psi_1(y)}\fr{(a^2)'(\varphi_1)-(a^2)'(u)}{\tau},
\end{align*}
\begin{align*}
F_3=&\fr{a'(g_{22}U_1 +g_{21}U_2 +g_{11}U_3 +g_{11}U_4)}{ac'(U_2+g)}+\fr{a\psi'_2+ag'_{21}\tau +a'g_{11}(g_{22}+g_{21})\tau}{ac'(U_2+g)}\\
&\ +\fr{a'(\psi_1g_{22}+\psi_2g_{11}+\psi_1g_{21}+\psi_2g_{11})}{a c'(U_2+g)} +\fr{2\psi_2(y)\psi_1(y)}{a(\varphi_1)c'(\varphi_1)\psi_1(y)}\cdot\fr{a'(u)-a'(\varphi_1)}{\tau} \\
&\ -\fr{2a'\psi_1\psi_2g_{11}}{a(\varphi_1)c'(\varphi_1)\psi_1(y)(U_2+g)}
-\fr{2a'\psi_1\psi_2}{a(c'U_2+c'g)a(\varphi_1)c'(\varphi_1)}\cdot\fr{a(u)c'(u)-a(\varphi_1)c'(\varphi_1)}{\tau},
\end{align*}
\begin{align*}
F_4=&\fr{a'(g_{22}U_1+g_{21}U_2+ g_{11}U_3+ g_{11}U_4)}{a c'(U_1+g)} -\fr{a\psi'_2+ag'_{22}\tau -a'g_{11}(g_{22}+g_{21})\tau}{ac'(U_1+g)} \\
&\ +\fr{a'(\psi_1g_{22} +\psi_2g_{11} +\psi_2g_{11}+ \psi_1g_{21})}{a c'(U_1+g)} +\fr{2\psi_1(y)\psi_2(y)}{a(\varphi_1)c'(\varphi_1)\psi_1(y)}\cdot\fr{a'(u)-a'(\varphi_1)}{\tau} \\
&\  -\fr{2a'\psi_1\psi_2g_{11}}{(U_1+g)a(\varphi_1)c'(\varphi_1)\psi_1(y)}
-\fr{2a'\psi_1\psi_2}{a(c'U_1+c'g)a(\varphi_1)c'(\varphi_1)}\cdot\fr{a(u)c'(u)-a(\varphi_1)c'(\varphi_1)}{\tau}.
\end{align*}


\begin{thebibliography}{99}
\addtolength{\itemsep}{-0.7em}

\bibitem{Adlem} K. Adlem, M. \v{C}opi\v{v}, G. Luckhurst, etc., Chemically induced twist-bend nematic liquid crystals, liquid crystal dimers, and negative elastic constants, Physical Review E 88 (2013) 022503.


\bibitem{Ali0} G. Ali, J. Hunter, Diffractive nonlinear geometrical
optics for variational wave equations and the Einstein equations,
Comm. Pure Appl. Math. 60 (2007) 1522--1557.

\bibitem{Ali1} G. Ali, J. Hunter, Orientation waves in a director
field with rotational inertia, Kinet. Relat. Models 2 (2009) 1--37.

\bibitem{B} A. Bressan, Uniqueness of conservative solutions for nonlinear wave equations via characteristics, Bull Braz Math Soc, New Series 47 (2016) 157--169.

\bibitem{B-C1} A. Bressan, G. Chen,
Lipschitz metrics for a class of nonlinear wave equations, Arch. Rat. Mech. Anal. 226 (2017) 1303--1343.

\bibitem{B-C2} A. Bressan, G. Chen,
Generic regularity of conservative solutions to a nonlinear wave equation, Ann. I. H.
Poincar\'e-An 34 (2017) 335--354.


\bibitem{B-C-Z} A. Bressan, G. Chen, Q. Zhang,
Unique conservative solutions to a variational wave equation, Arch. Rat. Mech. Anal. 217
(2015) 1069--1101.

\bibitem{Bres-Huang} A. Bressan, T. Huang,
Representation of dissipative solutions to a nonlinear variational wave equation, Comm. Math. Sci. 14 (2016) 31--53.

\bibitem{Bres-Zheng} A. Bressan, Y. Zheng, Conservative solutions to a
nonlinear variational wave equation,  Comm. Math. Phys. 266 (2006)
471--497.

\bibitem{Cai} H. Cai, G. Chen, Y. Du, Uniqueness and regularity of conservative solution to a wave system modeling nematic liquid crystal,  J. Math. Pure Appl. 117 (2018) 185--220.


\bibitem{Chen-Zha-Zhe} G. Chen, P. Zhang, Y. Zheng, Energy conservative solutions to a onedimensional
full variational wave system of nematic liquid crystals, Comm. Pure Appl. Anal. 12 (2013) 1445--1468.


\bibitem{Collings} P. Collings, M. Hird, Introduction to liquid crystals: chemistry and physics, Taylor
and Francis, London, 1997.

\bibitem{Gennes} P. de Gennes, J. Prost, The Physics of Liquid Crystals, Clarendon Press, Oxford,
1995.


\bibitem{Dozov} I. Dozov,  On the spontaneous symmetry breaking in the mesophases of achiral banana-shaped molecules,  Europhys. Lett. 56 (2001) 247--253.



\bibitem{Frank} F. Frank, On the theory of liquid crystals, Disc. Farad. Soc. 25 (1958) 19--28.

\bibitem{Glassey} R. Glassey, J. Hunter, Y. Zheng,
Singularities of a variational wave equation,  J. Differential
Equations 129 (1996) 49--78.

\bibitem{Holden} H. Holden, X. Raynaud, Global semigroup of
conservative solutions of the nonlinear variational wave equation,
Arch. Rat. Mech. Anal. 201 (2011) 871--964.


\bibitem{Hu2012} Y. Hu,
Conservative solutions to a system of variational wave equations, J.
Differential Equations 252 (2012) 4002--4026.

\bibitem{Hu2015} Y. Hu,
Conservative solutions to a one-dimensional nonlinear variational wave equation, J.
Differential Equations 259 (2015) 172--200.



\bibitem{Hu2017} Y. Hu, Global solutions to a nonlinear wave system arising from cholesteric liquid crystals, J. Hyper. Differ. Eq. 14 (2017) 27-71.

\bibitem{Hu-Li} Y. Hu, J. Li, Sonic-supersonic solutions for the
two-dimensional steady full Euler equations, Arch. Rat. Mech. Anal. (2019), doi: 10.1007/s00205-019-01454-w.

\bibitem{Hu-Song} Y. Hu, H. Song, On the degenerate Cauchy problem for a nonlinear variational
wave system, Part II: The different wave speed case, prepare, 2019.

\bibitem{Hu-Wang1} Y. Hu, G. Wang,
Existence of smooth solutions to a one-dimensional nonlinear degenerate variational wave equation, Nonlinear Analysis 165 (2017) 80--101.

\bibitem{Hu-Wang2} Y. Hu, G. Wang,
On the Cauchy problem for a nonlinear variational wave equation
with degenerate initial data, Nonlinear Analysis 176 (2018) 192-208.

\bibitem{Hunter-Saxton} J. Hunter, R. Saxton, Dynamics of
director fields, SIAM J. Appl. Math. 51 (1991) 1498--1521.



\bibitem{Kato} K. Kato, Y. Sugiyama, Local existence and uniqueness theory for the second sound equation in one space dimension, J. Hyper. Differ. Eq. 9 (2012) 177--193.


\bibitem{Leslie} F. Leslie, Theory and applications of liquid crystals, Springer-Verlag, New York, 1987.


\bibitem{LiT} T. Li, W. Yu, Boundary Value Problem for Quasilinear Hyperbolic Systems, Duke
University, 1985.


\bibitem{Oswald} P. Oswald, P. Pieranski, Nematic and cholesteric liquid crystals, CRC Press, Boca Raton, 2005.


\bibitem{Panov} V. Panov, M. Nagaraj, J. Vij, etc., Spontaneous periodic deformations in nonchiral planar-aligned bimesogens with a nematic-nematic transition and a negative elastic constant,  Phys. Rev. Lett. 105 (2010) 167801.


\bibitem{Saxton1989} R. Saxton, Dynamic instability of the liquid crystal
director, in: W.B. Lindquist (Ed.), Current Progress in Hyperbolic
Systems, in: Contemp. Math., vol. 100, Amer. Math. Soc., 1989, pp.
325--330.

\bibitem{Saxton1992} R. Saxton, Finite time boundary blowup for a degenerate, quasilinear
Cauchy problem, in: J. Hale and J. Wiener (Eds.), Partial Differential Equations, in: Pitman Research Notes in Mathematics Series, vol. 273, Longman, 1992, pp. 212--215.

\bibitem{Stephen} M. Stephen, J. Straley, Physics of liquid crystals, Rev. Mod. Phys. 46 (1974) 617--704.

\bibitem{Stewart} I. Stewart, The static and dynamic continuum theory of liquid crystals: a mathematical introduction, Taylor and Francis, London, 2004.

\bibitem{Wang} R. Wang, Z. Wu, Existence and uniqueness of solutions for some mixed initial boundary value problems of quasilinear hyperbolic systems in two independent variables, Acta Sci. Natur. Jilin Univ. 2 (1963) 459--502 (in Chinese).

\bibitem{Zha-Zhe2001} P. Zhang, Y. Zheng,  Rarefactive solutions to a nonlinear
variational wave equation of liquid crystals, Comm. Partial
Differential Equations 26 (2001) 381--419.


\bibitem{Zha-Zhe2003} P. Zhang, Y. Zheng,  Weak solutions to a nonlinear
variational wave equation, Arch. Rat. Mech. Anal. 166 (2003)
303--319.

\bibitem{Zha-Zhe2005} P. Zhang, Y. Zheng,  Weak solutions to a
nonlinear variational wave equation with general data, Ann. I. H.
Poincar\'e-An 22 (2005) 207--226.

\bibitem{Zha-Zhe2010} P. Zhang, Y. Zheng,  Conservative solutions to a
system of variational wave equations of nematic liquid crystals,
 Arch. Rat. Mech. Anal. 195 (2010) 701--727.

\bibitem{Zha-Zhe2012} P. Zhang, Y. Zheng,  Energy conservative solutions to a
one-dimensional full variational wave system, Comm. Pure Appl. Math. 65 (2012) 683--726.

\bibitem{Zhang-Zheng1} T. Zhang, Y. Zheng,  Sonic-supersonic solutions for the steady Euler equations,  Indiana Univ. Math. J. Vol. 63 (2014) 1785--1817.

\bibitem{Zhang-Zheng2} T. Zhang, Y. Zheng,  Existence of classical sonic-supersonic solutions for the pseudo
steady Euler equations (in Chinese), Scientia Sinica Mathematica 47 (2017) 1--18.



\end{thebibliography}
\end{document}